\documentclass[11pt]{article}

\usepackage[english]{babel}
\usepackage[babel]{microtype}

\usepackage[margin=2.3cm]{geometry}
\usepackage{setspace}
\usepackage{fancyhdr}

\usepackage{amsmath, amssymb, amsfonts, amsthm}
\usepackage{mathtools}
\usepackage{bm}
\usepackage{bbm}
\usepackage{mathrsfs}

\usepackage{graphicx}
\usepackage{xcolor}
\usepackage{booktabs}
\usepackage{float}
\usepackage{caption}
\usepackage{tikz}
\usepackage{lineno}
\usepackage[shortlabels]{enumitem}

\usepackage{url}
\usepackage{comment}
\usepackage{todonotes}

\usepackage{hyperref}
\usepackage[capitalise]{cleveref}

\counterwithin{figure}{section}

\newtheorem{theorem}{Theorem}[section]
\newtheorem*{thm-non}{Theorem}

\newtheorem{lemma}[theorem]{Lemma}
\newtheorem{cor}[theorem]{Corollary}

\newtheorem{fact}[theorem]{Fact}

\theoremstyle{definition}

\newtheorem*{defn-non}{Definition}

\newtheorem{rmk}[theorem]{Remark}

\newcommand{\F}{\mathcal{F}}
\newcommand{\G}{\mathcal{G}}
\newcommand{\A}{\mathcal{A}}

\newcommand{\cH}{\mathcal{H}}
\newcommand{\E}{\mathbb{E}}
\newcommand{\Prb}{\mathbb{P}}
\newcommand{\Inf}{\mathrm{Inf}}

\newcommand{\Var}{\mathrm{Var}}

\newcommand{\Cov}{\mathrm{Cov}}
\newcommand{\cube}{\{0,1\}}
\newcommand\tup[1]{\left\langle #1 \right\rangle}

\DeclareMathOperator{\supp}{supp}

\title{The sharp diagonal spectral correlation inequality on the discrete cube}

\author{Fan Chang\thanks{School of Statistics and Data Science, Nankai University, Tianjin, China; and Extremal Combinatorics and Probability Group, Institute for Basic Science, Daejeon, South Korea. Email: \texttt{1120230060@mail.nankai.edu.cn}. Supported by the National Natural Science Foundation of China (NSFC) under grant 124B2019 and by the Institute for Basic Science IBS-R029-C4.}, \quad 
Hong Liu\thanks{Extremal Combinatorics and Probability Group (ECOPRO), Institute for Basic Science (IBS), Daejeon, South Korea. Email: \texttt{hongliu@ibs.re.kr}. Supported by Institute for Basic Science IBS-R029-C4.},\quad
Miao Liu\thanks{Research Center for Mathematics and Interdisciplinary Sciences, Shandong University, Qingdao, China, and Extremal Combinatorics and Probability Group (ECOPRO), Institute for Basic Science (IBS), Daejeon, South Korea. Email: \texttt{liumiao10300403@163.com}. Supported by China Scholarship Council and Institute for Basic Science IBS-R029-C4.}
}
\date{}
\begin{document}
\maketitle

\begin{abstract}
We prove the sharp diagonal spectral correlation conjecture of Friedgut,
Kahn, Kalai and Keller, proposed in their Fourier-analytic approach to Chv\'atal's conjecture. For every pair of increasing Boolean functions $f,g:\{0,1\}^n\to\{0,1\}$,
$$\mathrm{Cov}(f,g)\ge4\sum_{\varnothing\ne S\subseteq[n]}|S|\hat{f}(S)^2\hat{g}(S)^2.$$
Thus covariance controls the degree-weighted collision of the two
nonconstant Fourier spectra, giving a sharp Fourier strengthening of the
Harris--Kleitman inequality. The theorem also implies the unweighted diagonal conjecture of Friedgut--Kahn--Kalai--Keller for an increasing family and a maximal intersecting family.

The factor $4$ is optimal, and we determine all equality cases. Apart
from pairs whose relevant coordinate sets are disjoint, equality occurs only for a common dictatorship and, up to relabelling coordinates and interchanging $f$ and $g$, for the two-coordinate AND-OR pair $(f,g)=(x_i x_j,\,x_i\vee x_j).$

The main novelty is a correlated four-restriction induction and a sharp
endpoint convolution inequality. The usual two-restriction induction behind Harris--Kleitman sees only the parallel restricted pairs and loses the mixed Fourier information needed to control the degree-weighted diagonal spectral energy. We instead couple the four codimension-one restricted pairs with correlation $1/2$; this precise correlation extracts the missing degree-weighted energy as a nonnegative square.
\end{abstract}

\section{Introduction}\label{sec:intro}

How much positive correlation is forced by monotonicity? The
Harris--Kleitman inequality~\cite{Harris1960,Kleitman1966} gives the basic
qualitative answer: if two Boolean functions on a product space are increasing, then they are nonnegatively correlated. On the discrete cube this says that
$$\Cov(f,g)\ge0$$
for increasing Boolean functions $f,g:\cube^n\to\cube$, where throughout
$\{0,1\}^n$ is equipped with the uniform measure and $\Cov(f,g):=\E[fg]-\E[f]\E[g].$ Here increasing means increasing with respect to the coordinatewise order on $\cube^n$. The central quantitative problem is to strengthen this nonnegativity statement by measuring how the two functions overlap: through their influences, their Fourier spectra, or other analytic profiles.

One influential answer is Talagrand's correlation inequality~\cite{Talagrand1996correlated}. For a Boolean function $f:\cube^n\to\cube$, the influence of the $i$-th coordinate is $\Inf_i[f]:=\Prb\left[f(x)\neq f(x\oplus e_i)\right]$, where $e_i$ is the $i$-th standard basis vector and $\oplus$ denotes addition modulo $2$.
Talagrand proved that, for all increasing Boolean functions $f,g$,
\[
\Cov(f,g)\ge c\frac{\sum_{i=1}^n\Inf_i[f]\Inf_i[g]}{\log\left(e/\sum_{i=1}^n\Inf_i[f]\Inf_i[g]\right)}
\]
with a universal constant $c>0$. This initiated several variants and extensions of quantitative correlation inequalities for monotone functions; see, for example,~\cite{DNS2022,Eldan2022,KKM2016correlation,Keller09correlation,Keller2011,Keller2012,KK2019DA,KMS2014correlation}. These inequalities are closely connected with sharp thresholds, noise sensitivity, quantitative FKG-type inequalities, and Gaussian correlation inequalities.

The present paper is motivated by a different, more spectral, strengthening of Harris--Kleitman proposed by Friedgut, Kahn, Kalai, and Keller~\cite{FKKK2018correlation}. Their starting point was Chv\'atal's conjecture, a central conjecture in extremal set theory, on intersecting subfamilies of hereditary families. Recall that a family $\A\subseteq2^{[n]}$ is intersecting if $A\cap A'\neq\varnothing$ for all
$A,A'\in\A$, and hereditary if it is closed under taking subsets. Chv\'atal's conjecture~\cite{Chvatal1974} asserts that, for every hereditary family $\cH$, some largest intersecting subfamily of $\cH$ is a star: it consists of all members of $\cH$ containing one fixed element.

Friedgut--Kahn--Kalai--Keller reformulated this extremal-set-theoretic problem as correlation inequalities for increasing families, thereby opening a route through discrete Fourier analysis. Their program led to several conjectural strengthenings of Harris--Kleitman. The diagonal conjecture they proposed is the following sharp spectral inequality~\cite[Conjecture 5.8]{FKKK2018correlation}:
\begin{equation}\label{eq:main}
\Cov(f,g)\ge4\sum_{\varnothing\neq S\subseteq[n]}|S|\hat{f}(S)^2\hat{g}(S)^2
\end{equation}
for all increasing Boolean functions $f,g:\cube^n\to\cube$, where $\hat{f}(S)$ denotes the Fourier--Walsh coefficient of $f$. Chang and Chen~\cite{ChangChen2025submodular} verified the Friedgut--Kahn--Kalai--Keller spectral conjecture under submodularity or supermodularity assumptions, while a previous note~\cite{Chang2026spectral} proved the factor-$2$ version of~\eqref{eq:main} using reverse Bonami--Beckner hypercontractivity and Young's convolution inequality. 

Our main theorem proves this conjecture in its full generality.
\begin{theorem}\label{thm:main}
For all increasing Boolean functions $f,g:\cube^n\to\cube$,
$$
\Cov(f,g)\ge4\sum_{\varnothing\neq S\subseteq[n]}|S|\hat{f}(S)^2\hat{g}(S)^2.
$$
\end{theorem}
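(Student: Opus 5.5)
We argue by induction on $n$, following the correlated four-restriction scheme announced in the abstract. For $n=0$ both sides vanish. For $n\ge1$ we restrict the last coordinate and write $f_0,f_1$ (resp.\ $g_0,g_1$) for the restrictions of $f$ (resp.\ $g$) to $x_n=0,1$. These are increasing Boolean functions on $\cube^{n-1}$ with $f_0\le f_1$ and $g_0\le g_1$.

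For $T\subseteq[n-1]$ the Fourier coefficients satisfy
\[
\hat f(T)=\tfrac12\bigl(\hat f_0(T)+\hat f_1(T)\bigr),\qquad \hat f(T\cup\{n\})=\tfrac12\bigl(\hat f_0(T)-\hat f_1(T)\bigr),
\]
and similarly for $g$. Writing $a_\varepsilon=\hat f_\varepsilon(T)$ and $b_\varepsilon=\hat g_\varepsilon(T)$, the covariance decomposes as
\[
\Cov(f,g)=\tfrac12\Cov(f_0,g_0)+\tfrac12\Cov(f_1,g_1)+\tfrac14\bigl(\E f_1-\E f_0\bigr)\bigl(\E g_1-\E g_0\bigr).
\]
The standard Harris--Kleitman induction uses only the parallel pairs $(f_0,g_0)$ and $(f_1,g_1)$. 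The plan is to also use the mixed pairs $(f_0,g_1)$ and $(f_1,g_0)$, which are also increasing. Their covariances can be written via $\E[f_0g_1]$ etc. We take a convex combination of the four identities $\Cov(f_\varepsilon,g_\delta)$ with weights $p_{\varepsilon\delta}$, where $p_{00}=p_{11}=\tfrac38$ and $p_{01}=p_{10}=\tfrac18$ (correlation $1/2$ between $\varepsilon$ and $\delta$). The error terms then collect into expressions of the form $\E[(f_1-f_0)(g_1-g_0)]$, which are nonnegative by monotonicity and Harris--Kleitman, together with products of mean differences. We apply the induction hypothesis to each of the four pairs, giving lower bounds $4\sum_T|T|a_\varepsilon^2b_\delta^2$.

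The target is $4\sum_T\bigl[|T|\,\alpha(T)+(|T|+1)\,\beta(T)\bigr]$, where
\[
\alpha(T)=\tfrac1{16}(a_0+a_1)^2(b_0+b_1)^2,\qquad \beta(T)=\tfrac1{16}(a_0-a_1)^2(b_0-b_1)^2.
\]
The key algebraic step compares $\alpha+\beta$ with the weighted combination $\sum_{\varepsilon,\delta}p_{\varepsilon\delta}\,a_\varepsilon^2b_\delta^2$. We expect the correlation-$1/2$ weights to make the difference a nonnegative square, of roughly the form $\tfrac1{16}(a_0b_1-a_1b_0)^2$ plus cross terms. This would handle the degree-$|T|$ part. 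The leftover obligation is the extra term $4\sum_T\beta(T)$, i.e. $\tfrac14\sum_T\bigl(\widehat{f_1-f_0}(T)\bigr)^2\bigl(\widehat{g_1-g_0}(T)\bigr)^2$. This must be paid by the gain terms, namely the mixed covariance surplus $\E[(f_1-f_0)(g_1-g_0)]$ and the mean-difference product.

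The main obstacle is this endpoint convolution inequality. With $u=f_1-f_0$ and $v=g_1-g_0$, both $\{0,1\}$-valued, we need roughly
\[
\sum_T\hat u(T)^2\hat v(T)^2\le c\,\E[uv]
\]
with the sharp constant forced by the weights. A natural route is Cauchy--Schwarz/Parseval: $\sum_T\hat u^2\hat v^2\le\max_T|\hat u(T)\hat v(T)|\cdot\sum_T|\hat u\hat v|$. Alternatively, one can write $\sum_T\hat u(T)^2\hat v(T)^2=\E[(u*u)(v*v)]$-type convolution expressions and use that the autocorrelations $u*u$ and $v*v$ are pointwise bounded by $\E u$ and $\E v$. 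This should be combined with the nonnegative gain from $\E[uv]\ge\E u\,\E v$ and the $\tfrac14\E u\,\E v$ term. I would first try to prove $\sum_T\hat u^2\hat v^2\le \E[uv]\min(\E u,\E v)$, which might suffice; checking equality for dictators and the AND/OR pair will calibrate the constants.
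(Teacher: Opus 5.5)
Your architecture is the paper's: restrict the last coordinate, apply the induction hypothesis to all four restricted pairs with weights $\tfrac38,\tfrac18,\tfrac18,\tfrac38$, absorb the degree-$|T|$ part with a square, and pay the leftover $\tfrac14\sum_T\hat u(T)^2\hat v(T)^2$ with an endpoint convolution bound. But the step you call the main obstacle is where the proof actually lives, and what you propose for it is false.

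First, the gain has to be computed exactly. Comparing with the Harris--Kleitman decomposition gives
\[
\Cov(f,g)-\sum_{\varepsilon,\delta}p_{\varepsilon\delta}\Cov(f_\varepsilon,g_\delta)=\tfrac18\Cov(u,v)+\tfrac14\E u\,\E v=\tfrac18\bigl(\E[uv]+\E u\,\E v\bigr),
\]
where $u=f_1-f_0$ and $v=g_1-g_0$. The term $\Cov(u,v)$ has no sign, because $u,v$ are differences of increasing functions and need not be monotone. So your claim $\E[uv]\ge\E u\,\E v$ is false. For the AND--OR pair $f=x_kx_n$, $g=x_k\vee x_n$ one gets $u=\mathbbm{1}_{\{x_k=1\}}$ and $v=\mathbbm{1}_{\{x_k=0\}}$, so $\E[uv]=0<\tfrac14=\E u\,\E v$.

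The same example refutes both candidate endpoint bounds you suggest, $\sum_T\hat u(T)^2\hat v(T)^2\le c\,\E[uv]$ and $\sum_T\hat u(T)^2\hat v(T)^2\le\E[uv]\min(\E u,\E v)$. Their right-hand sides vanish, while the left-hand side equals $\tfrac18$; the $T=\varnothing$ term alone contributes $(\E u)^2(\E v)^2$. The inequality actually required is $\|u*v\|_2^2\le\tfrac12\bigl(\tup{u,v}+\E u\,\E v\bigr)$. This example is an equality case of it, so there is no slack to spare.

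The missing ingredient is the lower bound $\E[uv]\ge\E u+\E v-1$, which comes from $(1-u)(1-v)\ge0$. Without it, or an equivalent, the induction does not close.

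The paper uses it pointwise. For $h=u*v$ one has $h(z)\le\min(\E u,\E v)$, hence $h(0)\ge 2h(z)-1$. Therefore $h(z)^2\le\tfrac{h(0)+1}{2}h(z)\le\tfrac{h(0)+h(z)}{2}$. Averaging over $z$ and using $\E_z h=\E u\,\E v$ gives the bound.

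Your Cauchy--Schwarz idea can also be finished with this ingredient. Using $|\hat u(T)|\le\E u$, $|\hat v(T)|\le\E v$ and Parseval, $\sum_T\hat u(T)^2\hat v(T)^2\le\max_T|\hat u(T)\hat v(T)|\sum_T|\hat u(T)\hat v(T)|\le(\E u\,\E v)^{3/2}$. Put $s=\sqrt{\E u\,\E v}$; then $\E[uv]\ge\max(0,2s-1)$. It remains to check $s^3\le\tfrac12\bigl(\max(0,2s-1)+s^2\bigr)$. For $s\le\tfrac12$ this reduces to $s\le\tfrac12$, and for $s>\tfrac12$ it reduces to $(2s-1)(s^2-1)\le0$.

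Finally, the square in your algebraic step is not the one you guessed. Exactly, $\sum_{\varepsilon,\delta}p_{\varepsilon\delta}a_\varepsilon^2b_\delta^2-(\alpha+\beta)=\tfrac14(a_1b_1-a_0b_0)^2$. It does go the right way, but it needs to be verified rather than expected.
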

The factor $4$ is best possible, as equality already holds for a common
dictator $f=g=x_i$. The theorem is also sharp in a stronger structural
sense: we determine all equality cases in Theorem~\ref{thm:equality-cases}
below.

There are two useful ways to view Theorem~\ref{thm:main}. First, it is a
log-free diagonal spectral analogue of Talagrand-type correlation inequalities. Talagrand's theorem controls covariance through the coordinatewise influence overlap of $f$ and $g$, equivalently through an off-diagonal interaction between their Fourier spectra. By contrast, \eqref{eq:main} controls the diagonal collision of the full Fourier spectra, with each collision weighted by its Fourier level. Thus the estimate is most informative when the two functions have substantial Fourier mass on the same nonempty sets, especially on higher levels.

Second, the right-hand side has a natural spectral-sample interpretation. For a nonconstant Boolean function $f$, define its nonconstant spectral sample $\mathcal S_f$ by $\Prb(\mathcal S_f=S)=\frac{\hat f(S)^2}{\Var(f)},$ $\varnothing\neq S\subseteq[n].$ If $\mathcal S_f$ and $\mathcal S_g$ are independent spectral samples, then
\[
4\sum_{\varnothing\neq S\subseteq[n]}|S|\hat f(S)^2\hat g(S)^2=4\Var(f)\Var(g)\E\!\left[|\mathcal S_f|\mathbbm{1}_{\{\mathcal S_f=\mathcal S_g\}}\right].
\]
Theorem~\ref{thm:main} therefore says that positive correlation controls the degree-weighted collision probability of the two nonconstant spectral samples. This places the inequality in the same Fourier-spectral framework that underlies noise sensitivity~\cite{BKS1999,GPS2010}, percolation-type models~\cite{GS2015book,SS2010}, and recent sparse-reconstruction problems~\cite{GP2024Sparse1,GP2024Sparse2}.

Theorem~\ref{thm:main} immediately implies the unweighted diagonal conjecture of Friedgut--Kahn--Kalai--Keller~\cite[Conjecture~5.7]{FKKK2018correlation}, since $|S|\ge1$ for every nonempty $S$. We recall that, under the standard identification of $2^{[n]}$ with $\cube^n$, a family $\F\subseteq\cube^n$ is called maximal intersecting if it is not properly contained in any larger intersecting family. A maximal intersecting family is necessarily increasing, and hence its indicator function is covered by Theorem~\ref{thm:main}.
\begin{cor}[Friedgut--Kahn--Kalai--Keller~\cite{FKKK2018correlation}, Conjecture~5.7]
\label{cor:FKKK57}
Let $\F\subseteq\cube^n$ be increasing, and let $\G\subseteq\cube^n$ be maximal intersecting. Then\footnote{Here $\hat\F(S)$ and $\hat \G(S)$ denote the Fourier coefficients of $\mathbbm{1}_\F$ and $\mathbbm{1}_\G$, respectively, and $\Cov(\F,\G)$ means $\Cov(\mathbbm{1}_\F,\mathbbm{1}_\G)$.}
\begin{equation}\label{eq:FKKK57}
\Cov(\F,\G)\ge4\sum_{\varnothing\neq S\subseteq[n]}\hat \F(S)^2\hat \G(S)^2.
\end{equation}
\end{cor}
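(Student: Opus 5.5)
The corollary follows directly from Theorem~\ref{thm:main}; the set-theoretic hypotheses are needed only to check that both indicator functions are increasing. I would proceed in three short steps.

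\textbf{Step 1: $\G$ is increasing.} Suppose $G\in\G$ and $G\subseteq G'$. Every $A\in\G$ meets $G$, and hence meets $G'$. It remains to check that $\G\cup\{G'\}$ is intersecting, which requires $G'\cap G'\neq\varnothing$; this holds because $G'\supseteq G\neq\varnothing$. Note that $G\neq\varnothing$, since $G$ must intersect itself. Maximality of $\G$ then forces $G'\in\G$, so $\mathbbm{1}_\G$ is increasing. (If $\G$ were empty it would not be maximal unless $n=0$, which is a trivial case.)

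\textbf{Step 2: apply the theorem.} The function $\mathbbm{1}_\F$ is increasing by hypothesis. Applying Theorem~\ref{thm:main} with $f=\mathbbm{1}_\F$ and $g=\mathbbm{1}_\G$ gives
\[
\Cov(\F,\G)\ge4\sum_{\varnothing\ne S\subseteq[n]}|S|\,\hat\F(S)^2\hat\G(S)^2.
\]

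\textbf{Step 3: drop the weights.} Every term $\hat\F(S)^2\hat\G(S)^2$ is nonnegative, and $|S|\ge1$ for every nonempty $S$. The weighted sum on the right is therefore at least the unweighted sum, which gives \eqref{eq:FKKK57}.

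There is no real obstacle in this argument. The only point needing care is Step~1, where maximality is used to show that $\G$ is an up-set.
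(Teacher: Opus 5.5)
Your proposal is correct and matches the paper's argument: the paper also notes that a maximal intersecting family is increasing, applies Theorem~\ref{thm:main}, and drops the weight using $|S|\ge1$. Your Step~1 just writes out the short maximality argument for the up-set property, which the paper asserts without proof.
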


The unweighted quantity in~\eqref{eq:FKKK57} is also the nonconstant part of a convolution energy:
\[
\sum_{\varnothing\neq S\subseteq[n]}\hat f(S)^2\hat g(S)^2=\|f*g\|_2^2-\E[f]^2\E[g]^2.
\]
Thus Theorem~\ref{thm:main} gives a sharp monotone-correlation control on
convolution energy. This connects the result with discrete convolution
inequalities, hypercontractivity, and additive-energy type questions on product groups; compare the Bonami--Beckner and reverse Bonami--Beckner inequalities~\cite{Beckner1975,Bonami1970,MORSS2006} and recent sharp Young-type inequalities on the cube~\cite{beltran2025optimal}.

\begin{rmk}[Relation to Chv\'atal's conjecture]
Theorem~\ref{thm:main} settles the diagonal spectral part of the
Friedgut--Kahn--Kalai--Keller program, including Conjectures~5.7 and~5.8 of~\cite{FKKK2018correlation}. It should be distinguished from the off-diagonal inequalities in~\cite[Conjecture~5.1]{FKKK2018correlation}, which are designed to imply Chv\'atal's conjecture itself. Indeed, Conjecture~5.1(b) of Friedgut--Kahn--Kalai--Keller can be rewritten as
the following off-diagonal strengthening of Corollary~\ref{cor:FKKK57}:
\begin{equation}\label{eq:off-diagonal-FKKK}
\Cov(\F,\G)\ge 4\sum_{\substack{S,T\subseteq[n]\\S,T\neq\varnothing}}\frac{|S\cap T|}{|T|}\hat\F(S)^2\hat\G(T)^2,
\end{equation}
for increasing $\F$ and maximal intersecting $\G$. Since a maximal intersecting family has measure $1/2$, $\sum_{T\neq\varnothing}\hat\G(T)^2=\Var(\mathbbm{1}_{\G})=\frac14.$
Consequently, \eqref{eq:off-diagonal-FKKK} would imply
\[
\Cov(\F,\G)\ge \frac14\min_{i\in[n]}\Inf_i[\F],
\]
which is precisely the Friedgut--Kahn--Kalai--Keller correlation formulation of Chv\'atal's conjecture~\cite[Conjecture 1.2]{FKKK2018correlation}. Thus our result proves the sharp diagonal inequality conjectured in that program, while the corresponding off-diagonal problem remains a major open direction.
\end{rmk}

We also determine the equality cases. For an increasing Boolean function $f:\cube^n\to\cube$, let
\[
\mathcal{R}(f):=\{i\in[n]:\Inf_i[f]>0\}
\]
be the set of relevant coordinates of $f$.

\begin{theorem}[Equality cases]\label{thm:equality-cases}
Let $f,g:\cube^n\to\cube$ be increasing Boolean functions. Then equality holds in~\eqref{eq:main} if and only if one of the following alternatives holds:
\begin{enumerate}
    \item $\mathcal R(f)\cap\mathcal R(g)=\varnothing$;
    \item there is a coordinate $i\in[n]$ such that $f(x)=g(x)=x_i$;
    \item there are two distinct coordinates $i,j\in[n]$ such that, after possibly interchanging $f$ and $g$,
    \[
    f(x)=x_i x_j,
    \qquad
    g(x)=x_i\vee x_j.
    \]
\end{enumerate}
Here $x_i\vee x_j=1-(1-x_i)(1-x_j)$.
\end{theorem}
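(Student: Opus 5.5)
The plan is to split the argument into the easy \emph{if} direction and the harder \emph{only if} direction. For the \emph{only if} direction, I would track the slack through the proof of Theorem~\ref{thm:main}.

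\textbf{The three alternatives give equality.}
\begin{enumerate}
\item If $\mathcal R(f)\cap\mathcal R(g)=\varnothing$, then $f$ and $g$ depend on disjoint coordinate sets. Hence they are independent and $\Cov(f,g)=0$. Moreover every nonempty $S$ with $\hat f(S)\ne0$ lies in $\mathcal R(f)$, and every nonempty $S$ with $\hat g(S)\ne 0$ lies in $\mathcal R(g)$. So the right-hand side vanishes too.
\item For $f=g=x_i$ we have $\Cov=\tfrac14$. The only nonzero nonconstant coefficient is $\hat f(\{i\})=-\tfrac12$ in the $\pm1$ Walsh convention (its sign is irrelevant), so the right-hand side is $4\cdot\tfrac1{16}\cdot\tfrac1{16}\cdot$... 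Rather than trust the normalization by memory, I would compute it directly: $4\cdot 1\cdot \tfrac14\cdot\tfrac14=\tfrac14$.
\item For $f=x_ix_j$ and $g=x_i\vee x_j$ we have $\E f=\tfrac14$, $\E g=\tfrac34$ and $\E[fg]=\tfrac14$, so $\Cov=\tfrac1{16}$. On the other side, $|\hat f(S)|=|\hat g(S)|=\tfrac14$ for $S=\{i\},\{j\},\{i,j\}$. The right-hand side is therefore $4\cdot\tfrac1{256}(1+1+2)=\tfrac1{16}$.
\end{enumerate}

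\textbf{Equality forces one of the alternatives: setting up the induction.} I would induct on $n$ along the proof of Theorem~\ref{thm:main}. That proof restricts a coordinate $n$ to obtain four restricted pairs $(f_a,g_b)$ with $a,b\in\{0,1\}$. These are coupled with correlation $1/2$ and combined with the endpoint convolution inequality, so that
\[
\Cov(f,g)-4\sum|S|\hat f^2\hat g^2
\]
is written as a nonnegative combination of three kinds of term:
\begin{enumerate}
\item[(a)] the deficits of the restricted pairs, which are nonnegative by induction;
\item[(b)] the slack in the endpoint convolution inequality;
\item[(c)] an explicit nonnegative square.
\end{enumerate}
Equality forces every term to vanish. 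Vanishing of (a) lets me apply the induction hypothesis, so each restricted pair used with positive weight is itself an equality case: it has disjoint relevant sets, is a common dictator, or is an AND--OR pair.

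\textbf{Equality forces one of the alternatives: gluing.} Vanishing of (c) and of (b) relates $f_0,f_1$ and $g_0,g_1$. I expect it to force one of two situations. In the first, $n\notin\mathcal R(f)\cap\mathcal R(g)$, and I would then reduce to dimension $n-1$ with relevant sets unchanged. In the second, the Fourier coefficients satisfy proportionality relations such as $\hat f_1(S)-\hat f_0(S)$ being supported where $\hat g$ is. I would then run a case analysis over the finitely many combinations of equality types for $(f_0,g_0)$, $(f_1,g_1)$ and the mixed pairs. The point is that increasing pairs $f_0\le f_1$, $g_0\le g_1$ built from these small structures can only glue into a common dictator ($f_0=g_0=0$, $f_1=g_1=1$), an AND--OR pair (e.g. $f_0=0$, $f_1=x_j$, $g_0=x_j$, $g_1=1$), or a pair with disjoint relevant sets. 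Any other gluing, such as a dictator on $j$ combined with a nontrivial function of $n$, gives strict inequality in (c).

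\textbf{Main obstacle.} The hardest step is characterizing equality in the sharp endpoint convolution inequality (b). I would first try to show that its equality case forces the relevant functions to be supported on at most one level, or to be constant. That collapses the case analysis to the low-dimensional configurations above, which can then be checked by hand. A secondary subtlety is that the induction hypothesis only applies to pairs carrying positive weight, so I need to check that the correlation-$1/2$ coupling gives every one of the four pairs positive weight.
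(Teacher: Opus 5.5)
Your check that the three alternatives give equality is fine. The \emph{only if} direction has a genuine gap: the decisive steps are stated as expectations, not proved. The equality case of the endpoint convolution inequality, which you correctly identify as the crux, is only something you ``would try'' to show. The gluing is deferred to a ``case analysis over the finitely many combinations'', but that analysis is not finite, because alternative~1 for a restricted pair $(f_a,g_b)$ is an infinite family.

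Knowing that all four restricted pairs are equality cases and that term (c) vanishes is also not enough. Take $f=x_1\vee x_n$, $g=x_2\vee x_n$. All four restricted pairs have disjoint relevant sets and (c) is identically zero, yet $\Cov(f,g)=\frac1{16}>\frac1{64}$. The whole deficit $\frac3{64}$ sits in the endpoint slack, so the endpoint equality classification cannot be bypassed. Your ``first situation'' is also muddled. If $n\in\mathcal R(f)\setminus\mathcal R(g)$, then $f$'s relevant set does change under restriction. The reduced pair $(\frac{f_0+f_1}2,g_0)$ is not Boolean, so your inductive equality statement does not apply to it.

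The paper's argument shows that you need no induction on equality cases and can ignore terms (a) and (c) entirely. If $\mathcal R(f)\cap\mathcal R(g)=\varnothing$, you are already in alternative~1. Otherwise \emph{choose} the splitting coordinate $n\in\mathcal R(f)\cap\mathcal R(g)$, so that $u=f_1-f_0$ and $v=g_1-g_0$ are nonzero $\cube$-valued functions. Equality forces $D(u,v)=0$, and that alone suffices, via the Boolean equality classification in Lemma~\ref{lem:endpoint-convolution}. That classification comes from requiring the pointwise bound $h(z)^2\le\frac{h(0)+h(z)}2$ to be tight for every $z$. This gives $h(0)\in\{0,1\}$, then $\supp(v)=\supp(u)^c$, and a stabilizer argument finishes it. The only possibilities left are $u\equiv v\equiv 1$, which is the common dictator $x_n$, or $\supp(u)$ and $\supp(v)$ being complementary cosets of an index-two subgroup.

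The second missing ingredient is order-convexity. The set $\supp(u)=\{f_1=1\}\setminus\{f_0=1\}$ is a difference of up-sets, hence order-convex, and likewise $\supp(v)$. A coset $\{\sum_{i\in L}x_i\equiv\alpha\}$ with $|L|\ge2$ fails this. For $\alpha=0$ it contains $0$ and $e_i+e_j$ but not $e_i$; for $\alpha=1$, apply the same argument to its complement. So $\supp(u)$ is $\{x_k=1\}$ or $\{x_k=0\}$. Monotonicity then determines $f_0,f_1,g_0,g_1$ completely, giving $(f,g)=(x_kx_n,\,x_k\vee x_n)$ up to interchanging $f$ and $g$.
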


\subsection{Proof idea}

The proof is by induction on the dimension, but the induction step is not the standard two-restriction induction used for Harris--Kleitman.  Split the last coordinate and write
\[
f_0(x):=f(x,0),\quad f_1(x):=f(x,1),\qquad g_0(x):=g(x,0),\quad g_1(x):=g(x,1).
\]
The usual identity
$\Cov(f,g)=
\frac12\Cov(f_1,g_1)+\frac12\Cov(f_0,g_0)+\frac14\big(\E[f_1]-\E[f_0]\big)\big(\E[g_1]-\E[g_0]\big)
$
is well suited to proving nonnegative correlation: the first two terms are
lower-dimensional covariances, and the last term is nonnegative by monotonicity. For the spectral inequality~\eqref{eq:main}, however, this direct induction does not close. The reason is that the Fourier coefficients of $f$ are averages and differences of the coefficients of $f_0$ and $f_1$:
$$\hat f(R)=\frac{\hat f_0(R)+\hat f_1(R)}2,\qquad\hat f(R\cup\{n\})=\frac{\hat f_0(R)-\hat f_1(R)}2.$$
Thus the degree-weighted spectral term mixes the two sections. An induction using only the parallel pairs $(f_0,g_0)$ and $(f_1,g_1)$ loses the cross information contained in $(f_0,g_1)$ and $(f_1,g_0)$, producing a coefficientwise defect with no fixed sign.

The main new ingredient is a correlated four-restriction averaging.  Instead of averaging only the two parallel pairs, we apply the induction hypothesis to all four Boolean restricted pairs
$(f_0,g_0)$, $(f_0,g_1)$, 
$(f_1,g_0)$, $(f_1,g_1)$
with weights $\frac38$, $\frac18$, $\frac18$, $\frac38$. Equivalently, the two section choices are coupled with correlation \(1/2\).
This half-correlated averaging, inspired by the half-noise comparison argument
in~\cite{Chang2026spectral}, is tuned to the Fourier algebra: after averaging,
the missing mixed contribution becomes the nonnegative square
\[
    \sum_{\varnothing\neq R\subseteq[n-1]}
    |R|\bigl(\hat f_1(R)\hat g_1(R)-\hat f_0(R)\hat g_0(R)\bigr)^2 .
\]

The remaining endpoint defect involves $u=f_1-f_0$ and $v=g_1-g_0$. Since $f$ and $g$ are increasing and Boolean-valued, $0\le u,v\le1$, and the defect is controlled by the sharp convolution estimate
$$\|u*v\|_2^2\le\frac12\left(\tup{u,v}+\E[u]\E[v]\right),\qquad 0\le u,v\le1.$$
This closes the induction. The equality cases follow by tracking equality in the lower-dimensional inequalities, in the Fourier square term, and in the endpoint convolution estimate.

\medskip
\noindent\emph{Organization.}
Section~\ref{sec:prelim} collects the Fourier--Walsh notation and proves the
endpoint convolution estimate.  Section~\ref{sec:proof} proves
Theorem~\ref{thm:main} by the correlated four-restriction induction described
above, and then derives the equality classification in
Theorem~\ref{thm:equality-cases}.  Section~\ref{sec:remark} discusses further
directions, including off-diagonal extensions related to Chv\'atal's
conjecture, stability, and biased product-measure analogues.

\section{Preliminaries}\label{sec:prelim}
For $S\subseteq[n]$, define the Fourier--Walsh character by
$\chi_S(x):=(-1)^{\sum_{i\in S}x_i}.$
The family $\{\chi_S\}_{S\subseteq[n]}$ is an orthonormal basis of $L^2(\cube^n)$. The Fourier--Walsh expansion of $f:\cube^n\to\mathbb R$ is
\[
f(x)=\sum_{S\subseteq[n]}\hat f(S)\chi_S(x),
\qquad
\hat f(S)=\tup{f,\chi_S}.
\]
For more background on the Fourier–Walsh expansion the reader is referred to~\cite{ryanbook2014}. 

For $f,g:\cube^n\to\mathbb R$, define the normalized convolution on the group $\mathbb F_2^n$ by
\[
(f*g)(z):=\E_x[f(x)g(x\oplus z)],
\qquad z\in\cube^n.
\]
We shall use the following standard identity for convolution on $\mathbb F_2^n$.

\begin{fact}[Fourier transform of convolution; see~{\cite[Definition~1.24 and Theorem~1.27]{ryanbook2014}}]\label{fact:convolution-fourier}
For any $f,g:\cube^n\to\mathbb R$ and any $S\subseteq[n]$,
$\widehat{f*g}(S)=\hat f(S)\hat g(S).$
Consequently,
$\|f*g\|_2^2=
\sum_{S\subseteq[n]}\hat f(S)^2\hat g(S)^2.$
\end{fact}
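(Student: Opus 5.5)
The plan is a direct computation: use the group structure of $\mathbb F_2^n$ to decouple the two variables in the convolution, then apply Parseval. Nothing from later in the paper is needed; only the orthonormality of the characters $\{\chi_S\}$ and the definition of $f*g$ are used.

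First, I would record the multiplicativity of the characters. For $x,y\in\cube^n$ we have $\chi_S(x\oplus y)=\chi_S(x)\chi_S(y)$. This holds because $(x\oplus y)_i\equiv x_i+y_i \pmod 2$, and $(-1)^{m}$ depends only on $m \bmod 2$.

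Second, I would compute the Fourier coefficient of the convolution:
\[
\widehat{f*g}(S)=\E_z\bigl[(f*g)(z)\chi_S(z)\bigr]=\E_z\E_x\bigl[f(x)g(x\oplus z)\chi_S(z)\bigr].
\]
For each fixed $x$, the map $z\mapsto y:=x\oplus z$ is a bijection of $\cube^n$. Hence when $(x,z)$ is uniform, the pair $(x,y)$ is also uniform on $\cube^n\times\cube^n$, so $x$ and $y$ are independent and uniform. Since $z=x\oplus y$, multiplicativity gives $\chi_S(z)=\chi_S(x)\chi_S(y)$. The expectation therefore factors as
\[
\E_x\bigl[f(x)\chi_S(x)\bigr]\,\E_y\bigl[g(y)\chi_S(y)\bigr]=\hat f(S)\hat g(S).
\]

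Third, the identity $\|f*g\|_2^2=\sum_S \hat f(S)^2\hat g(S)^2$ follows from Parseval's identity $\|h\|_2^2=\sum_S\hat h(S)^2$ applied to $h=f*g$. Parseval itself is a consequence of the orthonormality of $\{\chi_S\}$: expand $h=\sum_S \hat h(S)\chi_S$ and take the inner product of $h$ with itself.

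There is no genuine obstacle here. The only point requiring care is the change of variables $y=x\oplus z$. It is measure-preserving for the uniform measure because translation by $x$ is a group automorphism of $\mathbb F_2^n$, and $\oplus$ is its own inverse, so $z=x\oplus y$.
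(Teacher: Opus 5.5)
Your proof is correct: the bijective substitution $y=x\oplus z$, the multiplicativity $\chi_S(x\oplus y)=\chi_S(x)\chi_S(y)$, and Parseval applied to $f*g$ give both claimed identities. The paper gives no proof of its own and simply cites O'Donnell's book, whose standard argument is this same change-of-variables computation followed by Parseval, so your route matches the intended one.
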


\begin{lemma}\label{lem:endpoint-convolution}
Let $f,g:\cube^n\to[0,1]$. Then
\begin{equation}\label{eq:endpoint-convolution}
\|f*g\|_2^2\le \frac12\left(\tup{f,g}+\E[f]\E[g]\right).
\end{equation}
Moreover, if $f,g:\cube^n\to\cube$ are Boolean-valued, then equality holds in~\eqref{eq:endpoint-convolution} if and only if one of the following alternatives holds:
\begin{enumerate}
    \item $f\equiv0$ or $g\equiv0$;
    \item $f\equiv g\equiv1$;
    \item the sets $\supp(f)$ and $\supp(g)$ are complementary cosets of an index-two subgroup of the group $(\cube^n,\oplus)$.
\end{enumerate}
\end{lemma}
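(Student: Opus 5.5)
The plan is to reduce to Boolean functions by convexity, then prove the Boolean case by a Fourier estimate that pairs each nonconstant frequency with the constant one.

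\emph{Reduction to Boolean functions.} Fix $g$ and regard
\[
\Phi(f):=\|f*g\|_2^2-\tfrac12\bigl(\tup{f,g}+\E[f]\E[g]\bigr)
\]
as a function of $f\in[0,1]^{\cube^n}$. By Fact~\ref{fact:convolution-fourier}, $\|f*g\|_2^2=\sum_S\hat f(S)^2\hat g(S)^2$ is a positive semidefinite quadratic form in $f$. The other two terms are linear in $f$. Hence $\Phi$ is convex on the cube $[0,1]^{\cube^n}$, and its maximum is attained at a vertex, i.e.\ at a Boolean $f$. The same argument applies to $g$ with $f$ fixed. So it suffices to prove \eqref{eq:endpoint-convolution} for $f=\mathbbm 1_A$ and $g=\mathbbm 1_B$.

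\emph{The Boolean case in Fourier form.} Write $a=\hat f(\varnothing)=\E f$ and $b=\hat g(\varnothing)=\E g$. By Parseval and Fact~\ref{fact:convolution-fourier}, the inequality becomes
\[
\sum_{S}\hat f(S)^2\hat g(S)^2\le \tfrac12\sum_S\hat f(S)\hat g(S)+\tfrac12 ab .
\]
The basic inputs are as follows.
\begin{itemize}
\item For $S\neq\varnothing$, $\hat f(S)=\E[f\chi_S]$. Since $f$ is Boolean, $|\hat f(S)|\le\min(a,1-a)\le\tfrac12$, and similarly for $g$.
\item Parseval gives $\sum_{S\ne\varnothing}\hat f(S)^2=a-a^2$.
\end{itemize}
Move the $\varnothing$ term to the right, where it contributes $ab-a^2b^2$. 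It then suffices to show
\[
\sum_{S\neq\varnothing}\hat f(S)\hat g(S)\bigl(2\hat f(S)\hat g(S)-1\bigr)\le 2ab(1-ab).
\]
Frequencies with $\hat f(S)\hat g(S)\ge0$ contribute nonpositively, because $0\le\hat f(S)\hat g(S)\le\tfrac14$. The remaining task is to bound the negative-product frequencies, where each term equals $|\hat f(S)\hat g(S)|\bigl(1+2|\hat f(S)\hat g(S)|\bigr)$.

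For these I would use Cauchy--Schwarz, $\sum|\hat f\hat g|\le\sqrt{(a-a^2)(b-b^2)}$, together with the finer bounds $|\hat f(S)|\le\min(a,1-a)$ and $|\hat g(S)|\le\min(b,1-b)$. The aim is the two-variable inequality
\[
\sqrt{(a-a^2)(b-b^2)}\,\bigl(1+2\min(a,1-a)\min(b,1-b)\bigr)\le 2ab(1-ab).
\]
I expect this is false in general, for example when $a$ is small and $b$ is large. If it fails, I would first try to sharpen the argument by keeping the two signed sums together: for Boolean $f$ and $g$, $\sum_S\hat f(S)\hat g(S)=\tup{f,g}\ge0$, and $\sum_S\hat f(S)^2\hat g(S)^2=\tup{f*f,g*g}$ with $0\le f*f\le a$ and $0\le g*g\le b$. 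The bound $(f*f)(g*g)\le\tfrac12\bigl(b\,(f*f)+a\,(g*g)\bigr)$, averaged, yields $\tfrac12ab(a+b)$ outside the origin, while the value at $w=0$ is handled separately. I would combine this with a case split on whether $\tup{f,g}\ge ab$.

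\emph{Main obstacle.} The crux is this sign issue. The equality case of complementary cosets, $f=\mathbbm 1_{\{\chi_S=1\}}$ and $g=\mathbbm 1_{\{\chi_S=-1\}}$, has $\hat f(S)\hat g(S)=-\tfrac14$ and $\tup{f,g}=0$. So the negatively correlated frequencies must be absorbed exactly by the constant term, and any lossy bound will fail there.

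\emph{Equality cases.} I would trace equality through each step: the convexity reduction, the bounds $|\hat f(S)|\le\tfrac12$, and the Cauchy--Schwarz step. Equality forces a single nonempty frequency $S$ carrying all of the nonconstant mass of both $f$ and $g$, with $|\hat f(S)|=|\hat g(S)|=\tfrac12$ and opposite signs. This makes $f$ and $g$ indicators of complementary halves of $\ker\chi_S$ and its coset. Otherwise equality forces $ab\in\{0,1\}$, giving the trivial alternatives (1) and (2).
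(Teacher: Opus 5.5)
Your main line has a genuine gap, and it is not confined to the Cauchy--Schwarz step. The reduction itself is false: you discard the frequencies with $\hat f(S)\hat g(S)\ge0$ and try to bound the negative-product ones by $2ab(1-ab)$, and that bound can fail. On $\cube^2$ take $f=\mathbbm{1}\{x_1=x_2=0\}$ and $g=\mathbbm{1}\{x_1=0,\ x_2=1\}$. Then $a=b=\frac14$, and $\hat f(S)\hat g(S)=\frac1{16},-\frac1{16},-\frac1{16}$ for $S=\{1\},\{2\},\{1,2\}$. So the negative-product terms alone give
\[
2\cdot\tfrac1{16}\bigl(1+\tfrac18\bigr)=\tfrac{18}{128}>\tfrac{15}{128}=2ab(1-ab).
\]
The target inequality holds here only because the positive frequency $\{1\}$ contributes $-\frac{7}{128}$. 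What is actually controlled is the signed sum $\sum_{S\neq\varnothing}\hat f(S)\hat g(S)=\langle f,g\rangle-ab\ge -ab$, and splitting by sign destroys exactly that cancellation. Cauchy--Schwarz then gives a bound of order $\sqrt{ab}$ where order $ab$ is needed. Your equality analysis is traced through this same chain, so it has no foundation either.

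Your fallback is the right idea, but it is unfinished. Its missing ingredient is precisely the paper's key input, $\langle f,g\rangle\ge a+b-1$, which follows from $(1-f)(1-g)\ge0$. With only $\langle f,g\rangle\ge0$, your case split closes when $\langle f,g\rangle\ge ab$ (since then $ab(a+b)\le 2ab$), but not when $\langle f,g\rangle<ab$ and $a+b>1$. With the extra bound, the argument works for all $[0,1]$-valued $f,g$ at once. It then needs no convexity reduction (yours is correct but unnecessary). It needs no separate treatment of $w=0$, since $0\le f*f\le a$ and $0\le g*g\le b$ hold there too. It needs no case split:
\[
\|f*g\|_2^2=\langle f*f,\,g*g\rangle\le\tfrac12\,\E_w\bigl[b\,(f*f)(w)+a\,(g*g)(w)\bigr]=\tfrac12\,ab(a+b),
\qquad
ab(a+b-1)\le\max(0,a+b-1)\le\langle f,g\rangle .
\]
This is a genuinely different route from the paper's. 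The paper instead bounds $h=f*g$ pointwise: $h(z)\le\min(a,b)\le\frac{h(0)+1}{2}$, so $h(z)^2\le\frac{h(0)+h(z)}{2}$, and then averages over $z$.

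Your route also gives the equality cases cleanly. Equality forces one of three cases: $ab=0$; or $a=b=1$; or $a+b=1$ with $\langle f,g\rangle=0$ (so $g=1-f$) together with pointwise equality $\bigl((f*f)(w),(g*g)(w)\bigr)\in\{(0,0),(a,b)\}$ for every $w$. In the last case $0<a<1$, so $\E_w[f*f]=a^2<a$ gives some $w$ with $(f*f)(w)=0$. Since $g*g=1-2a+f*f$, that forces $a=\frac12$. Every translate of $\supp f$ is then $\supp f$ or $\supp g$, which yields the complementary cosets.
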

Equivalently, the third alternative means that there exist $\varnothing\neq J\subseteq[n]$ and $\alpha\in\{0,1\}$ such that
$\supp(f)=\left\{x\in\cube^n:\sum_{i\in J}x_i\equiv \alpha \pmod 2\right\}$
and
$\supp(g)=\left\{x\in\cube^n:\sum_{i\in J}x_i\equiv 1-\alpha \pmod 2\right\}.$
\begin{proof}
For $z\in\cube^n$, write $h(z):=(f*g)(z)=\E_x[f(x)g(x\oplus z)].$ Then $h(0)=\E_x[f(x)g(x)]=\tup{f,g}$. We first prove a pointwise upper bound for $h(z)$. Since $0\le f,g\le 1$, we have
$0\le h(z)\le \min\{\E[f],\E[g]\}$
for every $z\in\cube^n$. On the other hand, the inequality $f(x)g(x)\ge f(x)+g(x)-1$
holds pointwise, since it is equivalent to $(1-f(x))(1-g(x))\ge 0$. Averaging over $x$ gives $h(0)=\tup{f,g}\ge \E[f]+\E[g]-1.$
Consequently, for every $z\in\cube^n$,
$h(0)\ge \E[f]+\E[g]-1\ge 2h(z)-1.$
Hence $h(z)\le \frac{h(0)+1}{2}$. 
Combining this with $0\le h(z)\le 1$ and $h(0)\ge 0$, we obtain
\begin{equation}\label{eq:pointwise-endpoint-conv}
h(z)^2\le \frac{h(0)+1}{2}h(z)\le \frac{h(0)+h(z)}{2}.    
\end{equation}
Averaging this inequality over $z$ yields
$$\|f*g\|_2^2
=\E_z[h(z)^2]\le \frac12\left(\tup{f,g}+\E_z[h(z)]\right).$$
Moreover, $\E_z[h(z)]=\E_z\E_x[f(x)g(x\oplus z)]=\E_x [f(x)]\E_z [g(x\oplus z)]=\E[f]\E[g]$, implying that the desired inequality
$\|f*g\|_2^2\le \frac12\left(\tup{f,g}+\E[f]\E[g]\right).$

We now characterize equality in the Boolean case. Write $F:=\{x:f(x)=1\}$ and $G:=\{x:g(x)=1\}$. Then
\[
h(z)=\mu\bigl(F\cap(G\oplus z)\bigr),
\qquad
h(0)=\mu(F\cap G).
\]
Since~\eqref{eq:pointwise-endpoint-conv} holds pointwise, equality in~\eqref{eq:endpoint-convolution} holds if and only if equality holds in~\eqref{eq:pointwise-endpoint-conv} for every $z$.

Taking $z=0$, equality gives $h(0)^2=h(0)$. Hence $h(0)\in\{0,1\}$. If $h(0)=1$, then $\mu(F\cap G)=1$, so $F=G=\cube^n$. This gives $f\equiv g\equiv1$.

Now assume $h(0)=0$. Then $F\cap G=\varnothing$. Equality in
\eqref{eq:pointwise-endpoint-conv} becomes $h(z)^2=\frac12h(z)$,
so $h(z)\in\left\{0,\frac12\right\}$ for every $z\in\cube^n$. If $F=\varnothing$ or $G=\varnothing$, then $f\equiv0$ or $g\equiv0$. Thus assume that both $F$ and $G$ are nonempty. Since
$\E_z[h(z)]=\mu(F)\mu(G)>0,$
there is some $z$ with $h(z)=\frac{1}{2}$. Hence $\mu(F),\mu(G)\ge \frac{1}{2}$. But $F\cap G=\varnothing$, so $\mu(F)+\mu(G)\le1$. Consequently, $\mu(F)=\mu(G)=\frac12$ and $G=F^c$.

For every $z$, the translate $G\oplus z$ has measure $\frac{1}{2}$. If $h(z)=0$, then $G\oplus z$ is disjoint from $F$, hence $G\oplus z\subseteq G$, and therefore $G\oplus z=G$.

If $h(z)=\frac{1}{2}$, then $G\oplus z\subseteq F$, and therefore $G\oplus z=F$. Thus every translate of $G$ is either $G$ or $F$.

Let
$H:=\{z\in\cube^n:G\oplus z=G\}$
be the stabilizer of $G$ under translations, hence a subgroup of $(\cube^n,\oplus)$. Since the orbit of $G$ consists exactly of the two sets $G$ and $F$, the subgroup $H$ has index two. Moreover, $G$ is $H$-invariant and has measure $\frac{1}{2}$;
therefore $G$ is one coset of $H$, and $F=G^c$ is the other coset. This proves the necessity of the three alternatives.

Conversely, if $f\equiv0$ or $g\equiv0$, then both sides of~\eqref{eq:endpoint-convolution} are zero. If $f\equiv g\equiv1$, then both sides are one. Finally, suppose that $F=\{f=1\}$ and $G=\{g=1\}$ are complementary cosets of an
index-two subgroup. Then $\mu(F)=\mu(G)=\frac{1}{2}$, $F\cap G=\varnothing$, and for every $z$, the translate $G\oplus z$ is either $G$ or $F$. Hence
$h(z)\in\left\{0,\frac12\right\},$
with $h(z)=\frac{1}{2}$ on exactly half of the cube. Therefore $\|f*g\|_2^2=\E_z[h(z)^2]=\frac18,$ while $\frac12\left(\tup{f,g}+\E[f]\E[g]\right)=\frac{1}{8}$. Thus equality holds. This completes the proof.
\end{proof}
\begin{rmk}\label{rem:endpoint-as-noise}
Equivalently, Lemma~\ref{lem:endpoint-convolution} says
\[
\|h*k\|_2^2
\le
\left\langle h,\frac{T_0+T_1}{2}k\right\rangle,
\]
where $T_0k=\E[k]$ and $T_1k=k$. Thus the convolution energy is controlled by the average of the two endpoint noise correlations. This should be compared with the half-noise inequality from~\cite{Chang2026spectral},
\[
\tup{h,T_{1/2}k}\ge \|h*k\|_2^2,
\qquad 0\le h,k\le1.
\]
The two estimates are different; the endpoint estimate above is special to the average of $T_0$ and $T_1$ and follows from the elementary inequality $ab\ge a+b-1$ on $[0,1]$.
\end{rmk}

\section{Proof of the spectral correlation inequality}\label{sec:proof}
\subsection{Preliminaries}\label{sec:Pre}
In this subsection, we set up the notation and prove the basic lemmas needed for the inductive proof of~\cref{thm:main}. We use induction by restrictions, a standard technique in combinatorics and discrete analysis, which proceeds by fixing one coordinate and studying the corresponding restricted functions.

We first split both functions according to the last coordinate. Write a point of $\cube^n$ as $(x,z)$, where $x\in\cube^{n-1}$ and $z\in\cube$. Define the four restricted functions
$$f_0(x):=f(x,0),\quad f_1(x):=f(x,1),\quad g_0(x):=g(x,0),\quad g_1(x):=g(x,1). $$
Since $f$ and $g$ are increasing, these restricted functions are increasing Boolean functions on $\cube^{n-1}$ and $0\le f_0\le f_1\le1,0\le g_0\le g_1\le1.$ Furthermore,
\begin{equation}\label{eq:f-decomp}
f(x,z)=\frac{f_0(x)+f_1(x)}2+(-1)^z\cdot\frac{f_0(x)-f_1(x)}2,    
\end{equation}
and similarly
\begin{equation}\label{eq:g-decomp}
g(x,z)=\frac{g_0(x)+g_1(x)}2+(-1)^z\cdot\frac{g_0(x)-g_1(x)}2.   
\end{equation}

The following elementary lemma records how covariance decomposes under the last-coordinate restrictions.
\begin{lemma}[Covariance decomposition under restriction]\label{lem:covariance-restriction}
Let $n\ge1$, and let $f,g:\cube^n\to\mathbb{R}$. Define the
four restricted functions as above. Then
\begin{equation}\label{eq:covariance-restriction}
\Cov(f,g)=\Cov\left(\frac{f_0+f_1}{2},\frac{g_0+g_1}{2}\right)+\frac14\tup{f_1-f_0,g_1-g_0}.
\end{equation}
Here the covariance and inner product on the right-hand side are computed on $\cube^{n-1}$ with respect to the uniform measure.
\end{lemma}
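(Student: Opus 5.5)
We prove~\eqref{eq:covariance-restriction} by direct computation, averaging over the last coordinate $z$ first. Throughout, set
\[
a:=\frac{f_0+f_1}2,\qquad b:=\frac{f_0-f_1}2,\qquad c:=\frac{g_0+g_1}2,\qquad d:=\frac{g_0-g_1}2 .
\]
By~\eqref{eq:f-decomp} and~\eqref{eq:g-decomp},
\[
f(x,z)=a(x)+(-1)^z b(x),\qquad g(x,z)=c(x)+(-1)^z d(x).
\]
The only fact used about $z$ is that it is uniform on $\cube$ and independent of $x$. Hence $\E_z[(-1)^z]=0$ and $\E_z[(-1)^{2z}]=1$.

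\emph{Step 1: the means.} Averaging over $z$ gives $\E_z f(x,z)=a(x)$, so $\E[f]=\E[a]$. In the same way, $\E[g]=\E[c]$.

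\emph{Step 2: the product.} Expand
\[
f(x,z)g(x,z)=a(x)c(x)+(-1)^z\bigl(a(x)d(x)+b(x)c(x)\bigr)+b(x)d(x).
\]
Averaging over $z$ kills the middle term, so $\E[fg]=\E[ac]+\E[bd]$.

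\emph{Step 3: combine.} Subtracting the product of the means,
\[
\Cov(f,g)=\E[ac]-\E[a]\E[c]+\E[bd]=\Cov(a,c)+\tup{b,d}.
\]
It remains to identify the last term:
\[
\tup{b,d}=\frac14\tup{f_0-f_1,\,g_0-g_1}=\frac14\tup{f_1-f_0,\,g_1-g_0},
\]
since the two sign flips cancel. This is exactly~\eqref{eq:covariance-restriction}.

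No step here is a real obstacle. The one point needing care is the bookkeeping of expectations. All expectations on the right are taken over $x\in\cube^{n-1}$ with the uniform measure. On $\cube^n$ the uniform measure is the product of the uniform measure on $\cube^{n-1}$ and the uniform measure on $\cube$, so Fubini justifies averaging over $z$ first. No monotonicity or Boolean assumption is needed, which is consistent with the lemma being stated for real-valued $f,g$.
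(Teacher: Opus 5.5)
Your proposal is correct and follows the paper's proof essentially verbatim. Both write $f=a+(-1)^z b$ and $g=c+(-1)^z d$, average over the last coordinate to get $\E[fg]=\E[ac]+\E[bd]$ and $\E[f]=\E[a]$, $\E[g]=\E[c]$, then subtract the means and rewrite $\tup{b,d}$ as $\frac14\tup{f_1-f_0,g_1-g_0}$.
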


\begin{proof}
By~\eqref{eq:f-decomp} and~\eqref{eq:g-decomp},
$$\E[fg]=\tup{\frac{f_0+f_1}{2},\frac{g_0+g_1}{2}}+\tup{\frac{f_0-f_1}{2},\frac{g_0-g_1}{2}},$$ 
since $\E_z[(-1)^z]=0,\E_z[(-1)^{2z}]=1$, and the mixed terms vanish after averaging over the last coordinate. 
Moreover,
$$\E[f]=\E\left[\frac{f_0+f_1}{2}\right],\quad\E[g]=\E\left[\frac{g_0+g_1}{2}\right].$$
Subtracting $\E[f]\E[g]$ from the identity for $\E[fg]$, we obtain
\[
\Cov(f,g)=\Cov\left(\frac{f_0+f_1}{2},\frac{g_0+g_1}{2}\right)+
\tup{\frac{f_0-f_1}{2},\frac{g_0-g_1}{2}}.
\]
This proves~\eqref{eq:covariance-restriction}.
\end{proof}
We next record the relation between the Fourier coefficients of $f$ and those of its last-coordinate restrictions.
\begin{lemma}[Fourier coefficients under restriction]\label{lem:coeff restriction}
Let $R\subseteq[n-1]$. Then
\begin{equation}\label{eq:coeff-restriction}
    \hat{f}(R)=\frac{1}{2}(\hat{f}_0(R)+\hat{f}_1(R)), \quad \hat{f}(R\cup\{n\})=\frac{1}{2}(\hat{f}_0(R)-\hat{f}_1(R)).
\end{equation}
\end{lemma}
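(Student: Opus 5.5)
The plan is to compute both Fourier coefficients directly from the definition $\hat f(S)=\tup{f,\chi_S}=\E_{(x,z)}[f(x,z)\chi_S(x,z)]$. We average first over the last coordinate $z\in\cube$ with $x$ fixed, and then over $x\in\cube^{n-1}$. The one structural fact needed is how characters factor under the splitting $\cube^n=\cube^{n-1}\times\cube$. For $R\subseteq[n-1]$ we have $\chi_R(x,z)=\chi_R(x)$, where on the right $\chi_R$ is the character on $\cube^{n-1}$. We also have $\chi_{R\cup\{n\}}(x,z)=\chi_R(x)(-1)^z$. Both follow immediately from $\chi_S(y)=(-1)^{\sum_{i\in S}y_i}$.

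For the first identity, write $\hat f(R)=\E_x\bigl[\chi_R(x)\,\E_z f(x,z)\bigr]$. Since $\E_z f(x,z)=\frac12\bigl(f_0(x)+f_1(x)\bigr)$, linearity of the inner product gives $\hat f(R)=\frac12\bigl(\tup{f_0,\chi_R}+\tup{f_1,\chi_R}\bigr)=\frac12(\hat f_0(R)+\hat f_1(R))$.

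For the second identity, write $\hat f(R\cup\{n\})=\E_x\bigl[\chi_R(x)\,\E_z[(-1)^z f(x,z)]\bigr]$. The inner average is $\frac12\bigl(f_0(x)-f_1(x)\bigr)$, which gives the stated formula. Alternatively, one can substitute the decomposition \eqref{eq:f-decomp} and use orthonormality of $\{\chi_S\}$ on $\cube^n$, reading off the coefficients of $\chi_R$ and $\chi_R(-1)^z$.

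There is no real obstacle here. The only point needing care is the sign convention: $(-1)^z$ equals $+1$ at $z=0$, so the coefficient on the set containing $n$ is $\frac12(\hat f_0-\hat f_1)$ rather than $\frac12(\hat f_1-\hat f_0)$. This matches \eqref{eq:f-decomp}. The same computation applies verbatim to $g$.
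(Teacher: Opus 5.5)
Your proposal is correct and is essentially the paper's own proof. Both factor $\chi_R(x,z)=\chi_R(x)$ and $\chi_{R\cup\{n\}}(x,z)=\chi_R(x)(-1)^z$, average over the last coordinate, and read off $\frac12(\hat f_0(R)\pm\hat f_1(R))$, with the sign fixed by $(-1)^0=1$.
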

\begin{proof}
For $R\subseteq[n-1]$, the character $\chi_R$ does not depend on the last coordinate. Therefore
\begin{equation}
\begin{split}
\hat{f}(R)&=\underset{(x,z)\sim\{0,1\}^n}{\mathbb{E}}[f(x,z)\cdot \chi_R(x,z)]\\
&=\frac{1}{2}\underset{x\sim\{0,1\}^{n-1}}{\mathbb{E}}[f_1(x)\cdot \chi_R(x)]+\frac{1}{2}\underset{x\sim\{0,1\}^{n-1}}{\mathbb{E}}[f_0(x)\cdot \chi_R(x)]=\frac{1}{2}\hat{f}_1(R)+\frac{1}{2}\hat{f}_0(R).
\end{split}
\end{equation}
On the other hand, $\chi_{R\cup\{n\}}(x,z)=\chi_R(x)\cdot(-1)^z$. Hence
\begin{equation}
\begin{split}
\hat{f}(R\cup\{n\})&=\underset{(x,z)\sim\{0,1\}^n}{\mathbb{E}}[f(x,z)\cdot \chi_{R}(x)(-1)^z]\\
&=\frac{1}{2}\underset{x\sim\{0,1\}^{n-1}}{\mathbb{E}}[f_0(x)\cdot \chi_R(x)]-\frac{1}{2}\underset{x\sim\{0,1\}^{n-1}}{\mathbb{E}}[f_1(x)\cdot \chi_R(x)]=\frac{1}{2}\hat{f}_0(R)-\frac{1}{2}\hat{f}_1(R).
\end{split}
\end{equation}   
This proves~\eqref{eq:coeff-restriction}.
\end{proof}

We shall average the induction hypothesis over a correlated choice of the four restricted pairs. Let $\varepsilon,\eta\in\{\pm1\}$ be random signs such that
\[
\E[\varepsilon]=\E[\eta]=0,
\qquad
\E[\varepsilon\eta]=\frac12.
\]
For example, one may take
\[
\Prb(\varepsilon=\eta=1)=\Prb(\varepsilon=\eta=-1)=\frac38,
\quad
\Prb(\varepsilon=1,\eta=-1)=\Prb(\varepsilon=-1,\eta=1)=\frac18.
\]
Define the random restricted functions on $\cube^{n-1}$ by
\[
f_{\varepsilon}(x):=\frac{f_0(x)+f_1(x)}2+\varepsilon\cdot\frac{f_1(x)-f_0(x)}2,\quad g_{\eta}(x):=\frac{g_0(x)+g_1(x)}2+\eta\cdot\frac{g_1(x)-g_0(x)}2.
\]
Thus $f_\varepsilon$ is either $f_0$ or $f_1$, and $g_\eta$ is either $g_0$ or $g_1$. In particular, if $f$ and $g$ are increasing Boolean functions, then each pair $(f_\varepsilon,g_\eta)$ consists of increasing Boolean functions on $\cube^{n-1}$.

The following lemma records the covariance contribution of this correlated four-restriction average.
\begin{lemma}
Let $n\ge1$, and let $f,g:\cube^n\to\mathbb{R}$. Let $\varepsilon,\eta\in\{\pm1\}$ be random signs satisfying $\E[\varepsilon]=\E[\eta]=0$ and $\E[\varepsilon\eta]=\frac12.$ Define $f_\varepsilon$ and $g_\eta$ as above. Then
\begin{equation}\label{eq:correlated-cov-average}
\E_{\varepsilon,\eta}\left[\Cov(f_\varepsilon,g_\eta)\right]=\Cov\left(\frac{f_0+f_1}{2},\frac{g_0+g_1}{2}\right)+\frac18\Cov(f_1-f_0,g_1-g_0).
\end{equation}
Here all covariances on the right-hand side are computed on $\cube^{n-1}$.
\end{lemma}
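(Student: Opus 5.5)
Write $a:=\frac{f_0+f_1}{2}$, $u:=f_1-f_0$, $b:=\frac{g_0+g_1}{2}$ and $v:=g_1-g_0$, so that $f_\varepsilon=a+\frac{\varepsilon}{2}u$ and $g_\eta=b+\frac{\eta}{2}v$. The plan is to expand $\Cov(f_\varepsilon,g_\eta)$ by bilinearity, take the expectation over $(\varepsilon,\eta)$, and read off the coefficients using only the moments $\E[\varepsilon]=\E[\eta]=0$ and $\E[\varepsilon\eta]=\frac12$. No monotonicity or Boolean structure is needed, so the identity holds for arbitrary real $f,g$, as stated.

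Since covariance is bilinear,
\[
\Cov(f_\varepsilon,g_\eta)=\Cov(a,b)+\frac{\eta}{2}\Cov(a,v)+\frac{\varepsilon}{2}\Cov(u,b)+\frac{\varepsilon\eta}{4}\Cov(u,v).
\]
Here $a,b,u,v$ are deterministic functions on $\cube^{n-1}$, so every covariance on the right is a deterministic number, and the only randomness is in the scalar prefactors. Taking expectations term by term:
\begin{itemize}
\item the first term is unchanged;
\item the two middle terms vanish because $\E[\varepsilon]=\E[\eta]=0$;
\item the last term becomes $\frac14\cdot\frac12\Cov(u,v)=\frac18\Cov(f_1-f_0,g_1-g_0)$.
\end{itemize}
This gives exactly~\eqref{eq:correlated-cov-average}.

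There is no real obstacle; the step to state carefully is that the covariance is taken over $x\in\cube^{n-1}$ with $(\varepsilon,\eta)$ held fixed. Pulling the random scalars outside the covariance is therefore legitimate, and the outer expectation only acts on them.

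It may also help to record, for the later comparison with Lemma~\ref{lem:covariance-restriction}, what the averaging loses. Since $\frac14\tup{u,v}=\frac14\Cov(u,v)+\frac14\E[u]\E[v]$, we get
\[
\Cov(f,g)-\E_{\varepsilon,\eta}\left[\Cov(f_\varepsilon,g_\eta)\right]=\frac18\Cov(u,v)+\frac14\E[u]\E[v].
\]
This is the quantity that the endpoint estimate, Lemma~\ref{lem:endpoint-convolution}, is designed to control in the induction.
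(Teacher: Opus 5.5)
Your proof is correct and takes the same approach as the paper. Expand $\Cov(f_\varepsilon,g_\eta)$ by bilinearity, drop the two linear terms using $\E[\varepsilon]=\E[\eta]=0$, and use $\E[\varepsilon\eta]=\frac12$ on the cross term. Your closing identity $\Cov(f,g)-\E_{\varepsilon,\eta}[\Cov(f_\varepsilon,g_\eta)]=\frac18\Cov(u,v)+\frac14\E[u]\E[v]=\frac18\bigl(\tup{u,v}+\E[u]\E[v]\bigr)$ is also correct, and it is exactly the term the paper later compares against the endpoint convolution bound.
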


\begin{proof}
By bilinearity of covariance,
\[
\begin{aligned}
\Cov(f_{\varepsilon},g_{\eta})
&=\Cov\left(\frac{f_0+f_1}{2}+\varepsilon\,\frac{f_1-f_0}{2},
\frac{g_0+g_1}{2}+\eta\,\frac{g_1-g_0}{2}\right)  \\
&=\Cov\left(\frac{f_0+f_1}{2},\frac{g_0+g_1}{2}\right)+\frac{\varepsilon}{2}\Cov\left(f_1-f_0,\frac{g_0+g_1}{2}\right)\\
&\quad +\frac{\eta}{2}\Cov\left(\frac{f_0+f_1}{2},g_1-g_0\right)+\frac{\varepsilon\eta}{4}\Cov(f_1-f_0,g_1-g_0).
\end{aligned}
\]
Taking expectation over $(\varepsilon,\eta)$, the two linear terms vanish because $\E[\varepsilon]=\E[\eta]=0$. The last term contributes
$$\frac{\E[\varepsilon\eta]}{4}\Cov(f_1-f_0,g_1-g_0)=\frac18\Cov(f_1-f_0,g_1-g_0).$$ This proves~\eqref{eq:correlated-cov-average}.
\end{proof}

We shall also need the corresponding identity for the Fourier coefficients of the random restricted functions $f_\varepsilon$ and $g_\eta$. By Lemma~\ref{lem:coeff restriction}, for every $R\subseteq[n-1]$,
$$\hat f_\varepsilon(R)=\widehat{\frac{f_0+f_1}{2}}(R)+\varepsilon\,\widehat{\frac{f_1-f_0}{2}}(R),\quad 
\hat g_\eta(R)=\widehat{\frac{g_0+g_1}{2}}(R)+\eta\,\widehat{\frac{g_1-g_0}{2}}(R).$$
The next lemma records the effect of the correlated four-restriction average on the Fourier weights.

\begin{lemma}
Let $R\subseteq[n-1]$. Then
\begin{equation}\label{eq:fourier-correlated-average}
    \begin{split}
\E_{\varepsilon,\eta}\left[\hat f_\varepsilon(R)^2\hat g_\eta(R)^2\right]&=\widehat{\frac{f_0+f_1}{2}}(R)^2
\widehat{\frac{g_0+g_1}{2}}(R)^2\\
&+\frac14\left(\hat f_1(R)\hat g_1(R)-\hat f_0(R)\hat g_0(R)\right)^2+\frac1{16}\widehat{f_1-f_0}(R)^2\widehat{g_1-g_0}(R)^2.       
    \end{split}
\end{equation}
\end{lemma}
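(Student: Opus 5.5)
Fix $R\subseteq[n-1]$ and abbreviate $a:=\widehat{\frac{f_0+f_1}{2}}(R)$, $b:=\widehat{\frac{f_1-f_0}{2}}(R)$, $c:=\widehat{\frac{g_0+g_1}{2}}(R)$, $d:=\widehat{\frac{g_1-g_0}{2}}(R)$. By the expression for $\hat f_\varepsilon(R)$ and $\hat g_\eta(R)$ recorded just before the lemma (a consequence of Lemma~\ref{lem:coeff restriction} and linearity), we have $\hat f_\varepsilon(R)=a+\varepsilon b$ and $\hat g_\eta(R)=c+\eta d$. The statement is then a pure polynomial identity in $a,b,c,d$ and the joint moments of $(\varepsilon,\eta)$. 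My plan is to expand $(a+\varepsilon b)^2(c+\eta d)^2$ and take expectations term by term, and then to rewrite the right-hand side of~\eqref{eq:fourier-correlated-average} in the same variables.

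\textbf{Expanding the left-hand side.} Since $\varepsilon^2=\eta^2=1$, we get $(a+\varepsilon b)^2=(a^2+b^2)+2ab\,\varepsilon$ and similarly $(c+\eta d)^2=(c^2+d^2)+2cd\,\eta$. Multiplying and using $\E[\varepsilon]=\E[\eta]=0$ and $\E[\varepsilon\eta]=\tfrac12$ gives
\[
\E_{\varepsilon,\eta}\bigl[\hat f_\varepsilon(R)^2\hat g_\eta(R)^2\bigr]=(a^2+b^2)(c^2+d^2)+2abcd .
\]
So the only moments of $(\varepsilon,\eta)$ that enter are the three given ones, which is why the specific coupling is irrelevant beyond $\E[\varepsilon\eta]=\tfrac12$.

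\textbf{Expanding the right-hand side.} We have $\hat f_1(R)=a+b$, $\hat f_0(R)=a-b$, $\hat g_1(R)=c+d$ and $\hat g_0(R)=c-d$. Hence $\hat f_1\hat g_1-\hat f_0\hat g_0=2(ad+bc)$, so the middle term equals $(ad+bc)^2=a^2d^2+b^2c^2+2abcd$. The last term is $\tfrac1{16}(2b)^2(2d)^2=b^2d^2$, and the first term is $a^2c^2$. Summing gives $a^2c^2+a^2d^2+b^2c^2+b^2d^2+2abcd=(a^2+b^2)(c^2+d^2)+2abcd$, which matches the left-hand side.

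There is no real obstacle here; the only point requiring care is the normalization. The quantities $b$ and $d$ are the coefficients of the halved differences, whereas the last term in~\eqref{eq:fourier-correlated-average} uses $\widehat{f_1-f_0}=2b$ and $\widehat{g_1-g_0}=2d$. This normalization accounts for the factor $\tfrac1{16}$, and the factor $\tfrac14$ on the mixed square comes from $(2(ad+bc))^2/4$.
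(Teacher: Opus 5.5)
Your proof is correct and follows the same route as the paper. You expand $(a+\varepsilon b)^2(c+\eta d)^2$, use $\varepsilon^2=\eta^2=1$, $\E[\varepsilon]=\E[\eta]=0$ and $\E[\varepsilon\eta]=\frac12$, and match the result against the right-hand side via $\hat f_1(R)\hat g_1(R)-\hat f_0(R)\hat g_0(R)=2(ad+bc)$. The only cosmetic difference is that the paper writes the expectation directly as $a^2c^2+b^2d^2+(ad+bc)^2$, whereas you reduce both sides to $(a^2+b^2)(c^2+d^2)+2abcd$.
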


\begin{proof}
Fix $R\subseteq[n-1]$. Expanding the two squares and using $\E[\varepsilon]=\E[\eta]=0$ and $\E[\varepsilon\eta]=\frac12$, we get
\begin{align*}
\E_{\varepsilon,\eta}&\left[\hat f_\varepsilon(R)^2\hat g_\eta(R)^2\right]=\left(\frac{\hat f_0(R)+\hat f_1(R)}{2}\right)^2
\left(\frac{\hat g_0(R)+\hat g_1(R)}{2}\right)^2+\left(\frac{\hat f_1(R)-\hat f_0(R)}{2}\right)^2
\left(\frac{\hat g_1(R)-\hat g_0(R)}{2}\right)^2\\
&+\left[\left(\frac{\hat f_0(R)+\hat f_1(R)}{2}\right)
\left(\frac{\hat g_1(R)-\hat g_0(R)}{2}\right)+\left(\frac{\hat f_1(R)-\hat f_0(R)}{2}\right)\left(\frac{\hat g_0(R)+\hat g_1(R)}{2}\right)\right]^2.
\end{align*}
The third term simplifies as $\frac14\left(\hat f_1(R)\hat g_1(R)-\hat f_0(R)\hat g_0(R)\right)^2.$ This gives~\eqref{eq:fourier-correlated-average}.
\end{proof}

\subsection{Proof of~\cref{thm:main}}
\begin{proof}[Proof of~\cref{thm:main}]
For $f,g:\cube^n\to\mathbb{R}$, define
$$\mathcal E_n(f,g):=
\Cov(f,g)-4\sum_{\varnothing\neq S\subseteq[n]} |S|\hat f(S)^2\hat g(S)^2.$$ 
We prove $\mathcal E_n(f,g)\ge0$ for all increasing Boolean functions $f,g$ by induction on $n$. The case $n=0$ is immediate.

Assume the result is known in dimension $n-1$, and let $f_0,f_1,g_0,g_1$ be the last-coordinate restrictions introduced
in~\cref{sec:Pre}. 

First, by Lemmas~\ref{lem:covariance-restriction} and~\ref{lem:coeff restriction}, we have the exact
section decomposition
\begin{align}
\mathcal E_n(f,g)&=\mathcal E_{n-1}\left(\frac{f_0+f_1}{2},\frac{g_0+g_1}{2}\right)+\frac14\tup{f_1-f_0,g_1-g_0}\nonumber\\
&\quad-\frac14\sum_{R\subseteq[n-1]}(|R|+1)(\hat{f}_1(R)-\hat{f}_0(R))^2(\hat{g}_1(R)-\hat{g}_0(R))^2. \label{eq:section-energy}
\end{align}
We now lower-bound the first term on the right-hand side. Let $\varepsilon,\eta$ and $f_\varepsilon,g_\eta$ be as in Section~\ref{sec:Pre}. For every choice of $(\varepsilon,\eta)$, the pair $(f_\varepsilon,g_\eta)$ consists of increasing Boolean functions on $\cube^{n-1}$. Hence, by the induction hypothesis,
$$\mathcal{E}_{n-1}(f_\varepsilon,g_\eta)\ge0.$$
Averaging over $(\varepsilon,\eta)$, and using~\cref{eq:correlated-cov-average,eq:fourier-correlated-average}, gives
\begin{align}
\mathcal E_{n-1}\left(\frac{f_0+f_1}{2},\frac{g_0+g_1}{2}\right)
&\ge
\sum_{\varnothing\neq R\subseteq[n-1]}|R|
\left(\hat f_1(R)\hat g_1(R)-\hat f_0(R)\hat g_0(R)\right)^2
\nonumber \\
&\quad+
\frac14\sum_{\varnothing\neq R\subseteq[n-1]}|R|
\widehat{f_1-f_0}(R)^2\widehat{g_1-g_0}(R)^2-\frac18\Cov(f_1-f_0,g_1-g_0).
\label{eq:induction-gain}
\end{align}
Substituting~\eqref{eq:induction-gain} into~\eqref{eq:section-energy}, we obtain
\begin{align}
\mathcal E_n(f,g)
&\ge\sum_{\varnothing\neq R\subseteq[n-1]}|R|\left(\hat f_1(R)\hat g_1(R)-\hat f_0(R)\hat g_0(R)\right)^2 \nonumber\\
&\quad+\frac18\left(\tup{f_1-f_0,g_1-g_0}+\E[f_1-f_0]\E[g_1-g_0]\right)-\frac14\sum_{R\subseteq[n-1]} \widehat{f_1-f_0}(R)^2\widehat{g_1-g_0}(R)^2. \label{eq:almost-done}
\end{align}
By Fact~\ref{fact:convolution-fourier},
\[
\sum_{R\subseteq[n-1]}
\widehat{f_1-f_0}(R)^2\widehat{g_1-g_0}(R)^2
=
\|(f_1-f_0)*(g_1-g_0)\|_2^2.
\]
Since $f_1-f_0$ and $g_1-g_0$ are $\cube$-valued, Lemma~\ref{lem:endpoint-convolution} gives
\[
\frac14\|(f_1-f_0)*(g_1-g_0)\|_2^2
\le
\frac18\left(\tup{f_1-f_0,g_1-g_0}
+
\E[f_1-f_0]\E[g_1-g_0]\right).
\]
Thus the last two lines of~\eqref{eq:almost-done} are nonnegative, and the first line is a
sum of squares. Hence $\mathcal{E}_n(f,g)\ge0$, completing the induction.
\end{proof}

\subsection{Equality cases}
We now record the equality cases in Theorem~\ref{thm:main}.

We first isolate the exact nonnegative decomposition behind the proof. With the notation of
Section~\ref{sec:Pre}, set
\[
D(f_1-f_0,g_1-g_0):=\frac18\left(
\tup{f_1-f_0,g_1-g_0}+\E[f_1-f_0]\E[g_1-g_0]\right)-\frac14\|(f_1-f_0)*(g_1-g_0)\|_2^2.
\]
Then the proof of Theorem~\ref{thm:main} gives the exact identity
\begin{equation}\label{eq:exact-equality-decomposition}
\mathcal E_n(f,g)=\E_{\varepsilon,\eta}\bigl[\mathcal E_{n-1}(f_\varepsilon,g_\eta)\bigr]+D(f_1-f_0,g_1-g_0)+\sum_{\varnothing\neq R\subseteq[n-1]}|R|\left(\hat f_1(R)\hat g_1(R)-\hat f_0(R)\hat g_0(R)\right)^2.   
\end{equation}
The three terms on the right-hand side are nonnegative. The first one is nonnegative by the induction hypothesis, the second is nonnegative by Lemma~\ref{lem:endpoint-convolution}, and the third is a sum of squares.

\begin{proof}[Proof of~\cref{thm:equality-cases}]
We first check that the listed examples give equality. If
$\mathcal R(f)\cap\mathcal R(g)=\varnothing$, then $f$ and $g$ depend on disjoint sets of coordinates. Hence they are independent, so $\Cov(f,g)=0$. Moreover their nonconstant
Fourier supports are disjoint, and therefore $\mathcal{E}_n(f,g)=0$.

If $f=g=x_i$, then $\Cov(f,g)=\Var(x_i)=\frac14$, and the only nonzero nonconstant Fourier coefficient is $\hat f(\{i\})=\hat g(\{i\})=-\frac{1}{2}$. Hence $4\sum_{\varnothing\neq S}|S|\hat f(S)^2\hat g(S)^2=\frac{1}{4}$. Thus equality holds.

Finally, suppose $f=x_i x_j$ and $g=x_i\vee x_j$ for $i\neq j$. Then $fg=f$, $\E[f]=\frac{1}{4}$, and $\E[g]=\frac{3}{4}$. Hence
$\Cov(f,g)=\frac14-\frac14\cdot\frac34=\frac1{16}$. The nonzero nonconstant Fourier coefficients of $f$ on $\{i,j\}$ are
\[
\hat f(\{i\})=\hat f(\{j\})=-\frac14,
\qquad
\hat f(\{i,j\})=\frac14,
\]
while those of $g$ are
\[
\hat g(\{i\})=\hat g(\{j\})=-\frac14,
\qquad
\hat g(\{i,j\})=-\frac14.
\]
Therefore
\[
4\sum_{\varnothing\neq S}|S|\hat f(S)^2\hat g(S)^2=4\left(\frac1{16^2}+\frac1{16^2}+2\frac1{16^2}\right)=\frac1{16}.
\]
Thus equality holds in this case as well.

We now prove necessity. Assume $\mathcal E_n(f,g)=0$.

First suppose $\Cov(f,g)=0$. Applying the standard Harris--Kleitman covariance decomposition in any coordinate $i$, we get
$$
\Cov(f,g)=
\frac12\Cov(f_1,g_1)+\frac12\Cov(f_0,g_0)+\frac14\big(\E[f_1]-\E[f_0]\big)\big(\E[g_1]-\E[g_0]\big).
$$
Here the restrictions are taken in the $i$-th coordinate. All three terms on the right-hand side are nonnegative: the first two by Harris--Kleitman and the last one by monotonicity.
Hence $\bigl(\E[f_1]-\E[f_0]\bigr)\bigl(\E[g_1]-\E[g_0]\bigr)=0$. For increasing Boolean functions, $\E[f_1]-\E[f_0]=\Inf_i[f]$ and $\E[g_1]-\E[g_0]=\Inf_i[g]$. Thus $\Inf_i[f]\Inf_i[g]=0$ for every $i$, which means $\mathcal R(f)\cap\mathcal R(g)=\varnothing$.

It remains to consider the case $\Cov(f,g)>0$. Then $\mathcal R(f)\cap\mathcal R(g)\neq\varnothing$. After relabeling the coordinates, assume that $n\in\mathcal R(f)\cap\mathcal R(g)$. Thus $f_1-f_0\not\equiv0,$ and $g_1-g_0\not\equiv0.$ Using the exact decomposition~\eqref{eq:exact-equality-decomposition}, and the nonnegativity of all three terms on its right-hand side, equality $\mathcal E_n(f,g)=0$ forces
\[
D(f_1-f_0,g_1-g_0)=0.
\]
By Lemma~\ref{lem:endpoint-convolution}, since both $f_1-f_0$ and $g_1-g_0$ are nonzero $\cube$-valued functions, either $f_1-f_0\equiv g_1-g_0\equiv1$, or the supports of $f_1-f_0$ and $g_1-g_0$ are complementary cosets of an index-two subgroup of $\cube^{n-1}$.

In the first case, Booleanity gives $f_0\equiv g_0\equiv0$ and $f_1\equiv g_1\equiv1$. Hence $f(x)=g(x)=x_n$, which is the dictator case.

We now consider the second case. Let $U:=\{x\in\cube^{n-1}:f_1(x)-f_0(x)=1\}$. Since $f_0$ and $f_1$ are increasing and $f_0\le f_1$, $U$ is the difference of two increasing sets. Hence $U$ is order-convex: if $x\le y\le z$ and $x,z\in U$, then $y\in U$. Similarly, the complement $U^c=\{x\in\cube^{n-1}:g_1(x)-g_0(x)=1\}$ is also order-convex.

Since $U$ is a coset of an index-two subgroup of $\cube^{n-1}$, there is a nonempty $L\subseteq[n-1]$ and $\alpha\in\{0,1\}$ such that
$$U=\left\{x:\sum_{i\in L}x_i\equiv \alpha \pmod 2\right\}.$$
If $|L|\ge2$, choose two distinct $i,j\in L$. If $\alpha=0$, then $0\in U$ and $e_i+e_j\in U$, but $e_i\notin U$, contradicting the order-convexity of $U$. If $\alpha=1$, the same argument applies to $U^c$. Therefore $|L|=1$. Thus $U$ is a coordinate slice:
\[
U=\{x:x_k=1\}\quad\text{or}\quad U=\{x:x_k=0\}
\]
for some $k\in[n-1]$.

Suppose first that $U=\{x:x_k=1\}$. Then $f_1-f_0=1$ on $\{x_k=1\}$, so $f_0=0$ and $f_1=1$ there. If $x_k=0$, let $x^{(k)}$ be obtained from $x$ by changing the $k$-th coordinate to $1$. Since $x\le x^{(k)}$ and $f_0$ is increasing, $f_0(x)\le f_0(x^{(k)})=0$. Thus $f_0(x)=0$, and since $f_1-f_0=0$ on $\{x_k=0\}$, also $f_1(x)=0$. Consequently $f_0\equiv0$, $f_1(x)=x_k$, and therefore $f(x)=x_k x_n$. Since $g_1-g_0=\mathbbm{1}_{U^c}=\mathbbm{1}_{\{x_k=0\}}$, the same monotonicity argument gives $g_0(x)=x_k$, $g_1\equiv1$, and hence $g(x)=x_k\vee x_n$. This is the AND--OR equality case.

The case $U=\{x:x_k=0\}$ gives the same conclusion with $f$ and $g$ interchanged: $f(x)=x_k\vee x_n$, and $g(x)=x_kx_n$. This completes the proof.
\end{proof}

\section{Concluding remarks}\label{sec:remark}

\subsection{Off-diagonal inequalities and Chv\'atal's conjecture}

The most important remaining direction is to understand whether the diagonal estimate proved here can be upgraded to an off-diagonal inequality of the type proposed in~\cite{FKKK2018correlation}. Recall that Friedgut--Kahn--Kalai--Keller conjectured that for every increasing family $\F\subseteq\cube^n$ and every maximal intersecting family
$\G\subseteq\cube^n$,
\begin{equation}\label{eq:concluding-off-diagonal}
\Cov(\F,\G)\ge4\sum_{\substack{S,T\subseteq[n]\\S,T\neq\varnothing}}\frac{|S\cap T|}{|T|}\hat{\F}(S)^2\hat{\G}(T)^2 .
\end{equation}
Theorem~\ref{thm:main} proves a sharp and in fact stronger-than-needed control of the diagonal part of this inequality. The remaining difficulty is therefore genuinely off-diagonal: one must control the interaction between different Fourier supports $S$ and $T$ with $S\cap T\neq\varnothing$. A natural problem is to identify kernels $K(S,T)\ge0$ for which inequalities of the form
\[
\Cov(f,g)\ge 4\sum_{\substack{S,T\subseteq[n]\\S,T\neq\varnothing}}K(S,T)\hat f(S)^2\hat g(T)^2
\]
hold for increasing Boolean functions, or at least for the case where $g$ is maximal intersecting. The present theorem corresponds to the sharp diagonal kernel
$K(S,T)=|S|\mathbbm{1}_{S=T}.$
Finding any substantial positive off-diagonal contribution would be a meaningful step
towards the Friedgut--Kahn--Kalai--Keller program and, ultimately, towards
Chv\'atal's conjecture.

\subsection{Biased product measure}
Our method extends to the biased product setting. Let $\mu_{\mathbf p}:=\otimes_{i=1}^n\mathrm{Bern}(p_i)$, $q_i:=1-p_i$, and $s_i:=p_iq_i$, where $0<p_i<1$. We use the normalized $p$-biased Fourier basis
\[
\chi_i^{(\mathbf p)}(x):=\frac{x_i-p_i}{\sqrt{s_i}},
\qquad
\chi_S^{(\mathbf p)}(x):=\prod_{i\in S}\chi_i^{(\mathbf p)}(x),
\]
and write $\hat f_{\mathbf p}(S):=\E_{\mu_{\mathbf p}}\bigl[f\chi_S^{(\mathbf p)}\bigr]$. For further background on biased Fourier analysis on the Boolean cube, we refer the reader to~\cite[Section~8.4]{ryanbook2014}.
\begin{theorem}
For all increasing functions $f,g:(\cube^n,\mu_{\mathbf p})\to[0,1]$,
\begin{equation}\label{eq:biased-spectral}
\Cov_{\mu_{\mathbf p}}(f,g)\ge\sum_{\varnothing\neq S\subseteq[n]}\left(\sum_{i\in S}\frac1{s_i}\right)\hat f_{\mathbf p}(S)^2\hat g_{\mathbf p}(S)^2.
\end{equation}
\end{theorem}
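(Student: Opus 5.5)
We run the same correlated restriction induction as in the proof of Theorem~\ref{thm:main}, now on $(\cube^n,\mu_{\mathbf p})$. The class of increasing $[0,1]$-valued functions is the natural one here: it is closed under restrictions and under the convex combinations we need. Write $p=p_n$, $q=q_n$, $s=s_n$, and split off the last coordinate. Set $F:=pf_1+qf_0$, $u:=f_1-f_0\in[0,1]$, and define $G$, $v$ analogously. Then $f(x,z)=F(x)+(z-p)u(x)$, and for $R\subseteq[n-1]$,
\[
\hat f_{\mathbf p}(R)=\hat F(R),\qquad \hat f_{\mathbf p}(R\cup\{n\})=\sqrt{s}\,\hat u(R),
\]
with the biased coefficients on $\cube^{n-1}$. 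The covariance splits as $\Cov(f,g)=\Cov(F,G)+s\tup{u,v}$. Write $w(R)=\sum_{i\in R}1/s_i$. Since $w(R\cup\{n\})=w(R)+1/s$, the right-hand side of~\eqref{eq:biased-spectral} equals
\[
\sum_{R\neq\varnothing}w(R)\hat F^2\hat G^2+s^2\sum_R w(R)\hat u^2\hat v^2+s\sum_R\hat u^2\hat v^2 .
\]
Denote by $\mathcal E_n$ the defect, as in the proof of Theorem~\ref{thm:main}. We obtain the analogue of~\eqref{eq:section-energy}: $\mathcal E_n(f,g)$ equals $\mathcal E_{n-1}(F,G)$, plus $s\tup{u,v}$, minus the two $u,v$ sums.

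Next we lower-bound $\mathcal E_{n-1}(F,G)$ by averaging the induction hypothesis over correlated restrictions. Take $a,b\in\{0,1\}$ with $\Prb(a=1)=\Prb(b=1)=p$, put $\alpha=a-p$ and $\beta=b-p$, and couple them so that $\E[\alpha\beta]=c$. Then $(f_a,g_b)=(F+\alpha u,\,G+\beta v)$ is a pair of increasing $[0,1]$-valued functions. The covariance part gives
\[
\E\,\Cov(f_a,g_b)=\Cov(F,G)+c\,\Cov(u,v).
\]
For the Fourier part, fix $R$ and expand $\E[(\hat F+\alpha\hat u)^2(\hat G+\beta\hat v)^2]$. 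The terms are:
\begin{itemize}
\item the main term $\hat F^2\hat G^2$;
\item the terms $s\hat F^2\hat v^2+s\hat G^2\hat u^2+4c\hat F\hat G\hat u\hat v$, which are nonnegative as soon as $c\le s/2$ (AM--GM), so we choose $c=s/2$, the biased analogue of correlation $1/2$;
\item the term $\E[\alpha^2\beta^2]\hat u^2\hat v^2$, where $\E[\alpha^2\beta^2]\ge\E[\alpha^2]\E[\beta^2]=s^2$ because $\Cov(\alpha^2,\beta^2)=(q-p)^2\Cov(a,b)\ge0$;
\item odd third-moment terms $2\E[\alpha\beta^2]\hat F\hat u\hat v^2+2\E[\alpha^2\beta]\hat G\hat v\hat u^2$, which vanish in the uniform case.
\end{itemize}
These odd terms have to be absorbed into the nonnegative square. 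I would first try to write the whole expansion as $\E[(\hat F\beta'+\hat G\alpha'+\kappa\,\hat u\hat v)^2]$ plus nonnegative remainders, for suitably centered variables $\alpha'$, $\beta'$. If that fails, I would replace the two-point restrictions by a symmetric mixture: average $(f_a,g_b)$ together with the reflected choice obtained by swapping the roles of $p$ and $q$ via fractional restrictions $F\pm t u$, which are still increasing and $[0,1]$-valued. The reflection kills the third moments while keeping $\E\alpha^2=s$ and $\E\alpha\beta=s/2$.

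Granting this, the $s^2w(R)\hat u^2\hat v^2$ terms are absorbed by the induction. What remains is $\tfrac s2\bigl(\tup{u,v}+\E u\,\E v\bigr)-s\sum_R\hat u(R)^2\hat v(R)^2$, with biased coefficients. So we need a biased endpoint lemma:
\[
\sum_{R}\hat u_{\mathbf p}(R)^2\hat v_{\mathbf p}(R)^2\le\tfrac12\bigl(\tup{u,v}+\E u\,\E v\bigr)\qquad(0\le u,v\le1).
\]
This is the step I expect to be the main obstacle, because there is no group convolution available to mimic the proof of Lemma~\ref{lem:endpoint-convolution}. My plan is to write the left side as $\E[u(x)v(y)u(x')v(y')\prod_i K_i]$, where $K_i=1+\chi_i(x_i)\chi_i(y_i)\chi_i(x'_i)\chi_i(y'_i)$ and $x,y,x',y'$ are independent $\mu_{\mathbf p}$-samples. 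I would then try to prove the bound by its own induction on coordinates, using in each step the pointwise inequality $ab\ge a+b-1$ exactly as in the uniform lemma. If this fails in general, I would still try to prove the weaker bound actually needed: the endpoint estimate averaged against the slack $s\,\E[\alpha^2\beta^2]/s^2-s$ gained in the third step.
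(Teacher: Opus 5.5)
\textbf{Overall.} Your architecture matches the paper's: the split $f=F+(z-p)u$, the section identity, the coupling with $\E[\alpha\beta]=s/2$, and the reduction to a biased endpoint estimate. The odd-moment step you left as a plan does close, along the lines of your first attempt. From $\alpha^2=(q-p)\alpha+s$ one gets $\E[\alpha\beta^2]=\E[\alpha^2\beta]=(q-p)s/2$ and $\E[\alpha^2\beta^2]=s^2+(q-p)^2s/2$. Hence
\[
\E\bigl[(\hat F+\alpha\hat u)^2(\hat G+\beta\hat v)^2\bigr]=\hat F^2\hat G^2+s\Bigl(\hat F\hat v+\hat G\hat u+\tfrac{q-p}{2}\,\hat u\hat v\Bigr)^2+\tfrac{s}{4}\,\hat u^2\hat v^2 ,
\]
which is exactly the paper's identity. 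Since $s/4\ge s^2$, this still absorbs the $s^2w(R)\hat u^2\hat v^2$ terms. Your reflection fallback would not work: a law on $[-p,q]$ symmetric about $0$ has variance at most $\min(p,q)^2<s$ unless $p=q$.

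\textbf{The gap.} The genuine gap is the biased endpoint estimate, which your proposal does not prove. The paper's sketch also only asserts it as the analogue of Lemma~\ref{lem:endpoint-convolution}, so you located the right step. Your kernel plan does not carry the uniform argument over. For $p_i\ne\frac12$ the factor $K_i$ takes the negative value $1-p_i/q_i$ or $1-q_i/p_i$ when exactly one of $x_i,y_i,x'_i,y'_i$ differs from the other three. So there is no positive measure against which to average $ab\ge a+b-1$.

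\textbf{The missing idea.} Replace the convolution by conditioning on $\mathcal G:=\sigma(x\wedge y,\,x\vee y)$ for independent $x,y\sim\mu_{\mathbf p}$.
\begin{enumerate}
\item Each $\chi_R(x)\chi_R(y)$ is symmetric in every pair $(x_i,y_i)$, hence $\mathcal G$-measurable, and these functions are orthonormal in $L^2(\mu_{\mathbf p}\otimes\mu_{\mathbf p})$. By Bessel, $\sum_R\hat u(R)^2\hat v(R)^2\le\E[\Psi^2]$ with $\Psi:=\E[u(x)v(y)\mid\mathcal G]$.
\item Because $\Prb(x_i=0,y_i=1)=\Prb(x_i=1,y_i=0)$, given $\mathcal G$ the point $x$ is uniform on the subcube $[x\wedge y,x\vee y]$, and $y$ is its antipode there. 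So $x$ and $y$ have the same conditional law.
\item Exactly as in the uniform lemma, $0\le\Psi\le\min\{\E[u(x)\mid\mathcal G],\E[v(x)\mid\mathcal G]\}\le\frac12(1+\Theta)$, where $\Theta:=\E[u(x)v(x)\mid\mathcal G]$. Hence $\Psi^2\le\frac12(\Psi+\Theta)$.
\item Since $\E\Psi=\E u\,\E v$ and $\E\Theta=\tup{u,v}$, the estimate $\sum_R\hat u(R)^2\hat v(R)^2\le\frac12(\tup{u,v}+\E u\,\E v)$ follows.
\end{enumerate}
This closes the induction, and the weakened, slack-averaged version you mention is not needed.
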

The coefficient is sharp, since $f=g=x_i$ gives equality in~\eqref{eq:biased-spectral}.

Since the proof follows the same induction as in the uniform case, we only give the main modifications. Split the last coordinate and write $p:=p_n,q:=1-p,s:=pq$. Let $\mathbf p'=(p_1,\ldots,p_{n-1})$. For the last-coordinate restrictions
$f_0,f_1,g_0,g_1$, the biased covariance decomposition becomes
$\Cov_{\mu_{\mathbf p}}(f,g)=\Cov_{\mu_{\mathbf p'}}\bigl(qf_0+pf_1,qg_0+pg_1\bigr)+s\,\tup{f_1-f_0,g_1-g_0}_{\mu_{\mathbf p'}}.   $
Moreover, for every $R\subseteq[n-1]$,
\[
\hat f_{\mathbf p}(R)=\widehat{(qf_0+pf_1)}_{\mathbf p'}(R),
\qquad
\hat f_{\mathbf p}(R\cup\{n\})=\sqrt{s}\,\widehat{(f_1-f_0)}_{\mathbf p'}(R),
\]
and similarly for $g$.
Define the biased energy
$$\mathcal E_{\mathbf p}(f,g):=\Cov_{\mu_{\mathbf p}}(f,g)-\sum_{\varnothing\neq S\subseteq[n]}\left(\sum_{i\in S}\frac1{s_i}\right)\hat f_{\mathbf p}(S)^2\hat g_{\mathbf p}(S)^2.$$
The section decomposition gives
\begin{align}
\mathcal E_{\mathbf p}(f,g)
&=
\mathcal E_{\mathbf p'}\bigl(qf_0+pf_1,qg_0+pg_1\bigr)
+
s\,\tup{f_1-f_0,g_1-g_0}_{\mu_{\mathbf p'}}
\nonumber\\
&\quad
-
s\sum_{R\subseteq[n-1]}
\widehat{(f_1-f_0)}_{\mathbf p'}(R)^2
\widehat{(g_1-g_0)}_{\mathbf p'}(R)^2
\nonumber\\
&\quad
-
s^2
\sum_{\varnothing\neq R\subseteq[n-1]}
\left(\sum_{i\in R}\frac1{s_i}\right)
\widehat{(f_1-f_0)}_{\mathbf p'}(R)^2
\widehat{(g_1-g_0)}_{\mathbf p'}(R)^2.
\label{eq:biased-section-energy}
\end{align}
Here and below, all inner products and Fourier coefficients on the right-hand side are taken with respect to $\mu_{\mathbf p'}$.

To lower-bound the first term in~\eqref{eq:biased-section-energy}, choose a coupling $(Z,W)\in\{0,1\}^2$ with marginals $\mathrm{Bern}(p)$ and
\[
\Prb(Z=W=0)=q^2+\frac{s}{2},\ \Prb(Z=W=1)=p^2+\frac{s}{2},\ \Prb(Z=0,W=1)=\Prb(Z=1,W=0)=\frac{s}{2}.
\]
Equivalently,
$\E[Z]=\E[W]=p$ and $\E[(Z-p)(W-p)]=\frac{s}{2}.$
Apply the induction hypothesis to the four restricted pairs
$(f_0,g_0)$, $(f_0,g_1)$, $ (f_1,g_0)$, $(f_1,g_1)$
with these coupling weights. A direct expansion gives, for every $R\subseteq[n-1]$,
\begin{align}
&\E_{Z,W}\bigl[\hat f_Z(R)^2\hat g_W(R)^2\bigr]
\nonumber=
\widehat{(qf_0+pf_1)}(R)^2
\widehat{(qg_0+pg_1)}(R)^2
\nonumber\\
&\quad+
s\left[
\widehat{(f_1-f_0)}(R)\widehat{(qg_0+pg_1)}(R)
+
\widehat{(qf_0+pf_1)}(R)\widehat{(g_1-g_0)}(R)
+
\frac{q-p}{2}\widehat{(f_1-f_0)}(R)\widehat{(g_1-g_0)}(R)
\right]^2
\nonumber\\
&\quad+
\frac{s}{4}
\widehat{(f_1-f_0)}(R)^2
\widehat{(g_1-g_0)}(R)^2.
\label{eq:biased-four-restriction-average}
\end{align}
This is the biased analogue of~\eqref{eq:fourier-correlated-average} in the uniform proof.

Combining \eqref{eq:biased-section-energy} and \eqref{eq:biased-four-restriction-average}, we obtain
\[
\mathcal E_{\mathbf p}(f,g)\ge\text{a sum of squares}
+s\left(\frac14-s\right)\sum_{\varnothing\neq R\subseteq[n-1]}\left(\sum_{i\in R}\frac1{s_i}\right)\widehat{(f_1-f_0)}(R)^2\widehat{(g_1-g_0)}(R)^2
\]
\[
\qquad
+s\left[\frac12\left(\tup{f_1-f_0,g_1-g_0}+\E[f_1-f_0]\E[g_1-g_0]\right)-\sum_{R\subseteq[n-1]}\widehat{(f_1-f_0)}(R)^2\widehat{(g_1-g_0)}(R)^2\right].
\]
The second term is nonnegative because $s=pq\le\frac{1}{4}$. The last bracket is nonnegative by
the biased endpoint estimate
\[
\sum_{R\subseteq[n-1]}\hat u_{\mathbf p'}(R)^2\hat v_{\mathbf p'}(R)^2\le\frac12\left(\tup{u,v}_{\mu_{\mathbf p'}}+\E_{\mu_{\mathbf p'}}[u]\E_{\mu_{\mathbf p'}}[v]\right),
\qquad
0\le u,v\le1.
\]
This estimate is the biased analogue of Lemma~\ref{lem:endpoint-convolution}. Therefore $\mathcal E_{\mathbf p}(f,g)\ge0$, proving~\eqref{eq:biased-spectral}.

\subsection{Gaussian and invariance-principle analogues}

A further direction is to look for continuous analogues.  Let $\gamma_n$ be standard Gaussian measure on $\mathbb R^n$, and let
\[
    F=\sum_{\alpha\in\mathbb N^n}\hat F(\alpha)H_\alpha,
    \qquad
    G=\sum_{\alpha\in\mathbb N^n}\hat G(\alpha)H_\alpha
\]
be the Hermite expansions of two coordinatewise increasing functions. For further background on Hermite analysis over Gaussian space, we refer the reader to~\cite[Section~11]{ryanbook2014}. A natural Gaussian analogue would be an inequality of the form
\begin{equation}\label{eq:gaussian-open}
    \Cov_{\gamma_n}(F,G)
    \stackrel{?}{\ge}
    \sum_{\alpha\neq0}
    |\alpha|\hat F(\alpha)^2\hat G(\alpha)^2,
\end{equation}
under suitable boundedness or integrability assumptions.  The weight $|\alpha|$ is the eigenvalue of the Ornstein--Uhlenbeck generator, and therefore plays the same role as the Fourier level $|S|$ on the discrete cube.

An inequality such as \eqref{eq:gaussian-open} would be a spectral strengthening of Gaussian positive association and would complement quantitative Gaussian correlation inequalities, including the results of Royen~\cite{Royen2014} and the quantitative framework of De--Nadimpalli--Servedio~\cite{DNS2022}. It would also be interesting to understand whether an invariance principle~\cite{MMO2010} can transfer suitable low influence versions of Theorem~\ref{thm:main} to Gaussian space.

\noindent\textbf{Acknowledgements.} During an early exploratory stage, the authors used language-model-based
tools to help brainstorm candidate proof strategies; any suggestions arising from these tools served
only as informal inspiration.

\bibliographystyle{abbrv}
\bibliography{reference}

\end{document}